\documentclass
[
    a4paper,
    DIV=14,
    abstract=true,
]
{scrartcl}

\usepackage
{
    amsmath,
    amssymb,
    amsthm,
    array,
    authblk,
    dsfont, 
    enumitem,
    graphicx,
    mathtools,
    nicefrac,
    tabularx,
    tcolorbox,
    tikz,
    xcolor,
}
\usepackage[normalem]{ulem}

\usepackage[scr=boondoxupr]{mathalfa}

\usepackage[utf8]{inputenc}

\usepackage[pdffitwindow=false,
            plainpages=false,
            pdfpagelabels=true,
            pdfpagemode=UseOutlines,
            pdfpagelayout=SinglePage,
            bookmarks=false,
            colorlinks=true,
            hyperfootnotes=false,
            linkcolor=blue,
            urlcolor=blue!30!black,
            citecolor=green!50!black]{hyperref}

\usepackage[bf,normal]{caption}
\usetikzlibrary{arrows, arrows.meta, shapes, fit, positioning}

\tcbuselibrary{skins}
\tcbset{width=\linewidth, enhanced, sidebyside, sidebyside align=center, halign upper=flush center, halign lower=flush center, boxrule=1pt, size=title, segmentation style={solid, line width=1pt}, colback=red!3,colframe=red!50}

\graphicspath{{pics/}}

\DeclareMathAlphabet{\mathpzc}{OT1}{pzc}{m}{it}

\newcommand{\subfiguretitle}[1]{{\scriptsize{#1}} \\}
\newcommand{\R}{\mathbb{R}}                                      
\newcommand{\pd}[2]{\frac{\partial#1}{\partial#2}}               
\newcommand{\innerprod}[2]{\left\langle #1,\, #2 \right\rangle}  
\newcommand{\ts}{\hspace*{0.1em}}                                
\newcommand{\mc}[2][]{\mathpzc{#2}{\smash[t]{\mathstrut}}_{#1}}  
\providecommand{\abs}[1]{\left\lvert #1 \right\rvert}            
\providecommand{\norm}[1]{\left\lVert #1 \right\rVert}           
\providecommand{\vdot}{\boldsymbol\cdot}                         

\newcommand\xqed[1]{\leavevmode\unskip\penalty9999 \hbox{}\nobreak\hfill \quad\hbox{#1}}
\newcommand{\exampleSymbol}{\xqed{$\triangle$}}

\DeclareMathOperator{\diag}{diag}

\DeclareMathOperator{\mspan}{span}

\newtheorem{theorem}{Theorem}[section]
\newtheorem{corollary}[theorem]{Corollary}
\newtheorem{lemma}[theorem]{Lemma}
\newtheorem{proposition}[theorem]{Proposition}
\newtheorem{definition}[theorem]{Definition}
\theoremstyle{definition}
\newtheorem{example}[theorem]{Example}
\newtheorem{remark}[theorem]{Remark}

\setcounter{MaxMatrixCols}{17}

\makeatletter
\renewcommand*\env@matrix[1][*\c@MaxMatrixCols c]{%
  \hskip -\arraycolsep
  \let\@ifnextchar\new@ifnextchar
  \array{#1}}
\makeatother

\mathtoolsset{centercolon} 


\allowdisplaybreaks

\DeclareCaptionLabelFormat{period}{#1~#2}
\captionsetup{labelformat=period}

\makeatletter
\def\blfootnote{\gdef\@thefnmark{}\@footnotetext}
\makeatother

\begin{document}

\title{Dynamical systems and complex networks: \\ A Koopman operator perspective}
\author[1]{Stefan Klus\thanks{Corresponding author: \href{mailto:s.klus@hw.ac.uk}{s.klus@hw.ac.uk}}}
\author[2]{Nata\v sa Djurdjevac Conrad}
\affil[1]{School of Mathematical \& Computer Sciences, Heriot--Watt University, Edinburgh, UK}
\affil[2]{Zuse Institute Berlin, Berlin, Germany}

\date{}

\maketitle

\vspace*{-10ex}

\begin{abstract}
The Koopman operator has entered and transformed many research areas over the last years. Although the underlying concept---representing highly nonlinear dynamical systems by infinite-dimensional linear operators---has been known for a long time, the availability of large data sets and efficient machine learning algorithms for estimating the Koopman operator from data make this framework extremely powerful and popular. Koopman operator theory allows us to gain insights into the characteristic global properties of a system without requiring detailed mathematical models. We will show how these methods can also be used to analyze complex networks and highlight relationships between Koopman operators and graph Laplacians.
\end{abstract}

\section{Introduction}

This perspective article is meant to be a self-contained introduction to and review of transfer operators such as the Koopman operator and the Perron--Frobenius operator as well as an overview of different applications. We will first introduce the required foundations and then show how transfer operators can not only be used to analyze highly nonlinear dynamical systems but also complex networks. In particular, we will focus on relationships between transfer operators for continuous-time stochastic processes defined on a continuous state space---whether they be reversible, non-reversible but time-homogeneous, or time-inhomogeneous---and their discrete counterparts associated with random walks on undirected, directed, and time-evolving graphs.

Transfer operators play an important role in an increasing number of research fields. A few exemplary applications are illustrated in Figure~\ref{fig:transfer operator applications}. More details regarding the specific application areas can be found in the following publications:
(a)~molecular dynamics \cite{Sch99, NSVRW09, PWSKSHSCSF11, SS13, SP13, KBSS18},
(b)~fluid dynamics \cite{Schmid10, Mezic05, RMBSH09, BMM12, JSN14, KGPS18},
(c)~climate science \cite{FHRSSG12, FSvS14, GSZ15, ZG16, KHMN19, NTK21},
(d)~quantum physics~\cite{Pav14, KNH20, GKD+21, KNP22, Giannakis2022, LLASS22},
(e)~chaotic dynamical systems~\cite{Dellnitz97, DJ99, FD03, DKZ17, BGLR23},
(f)~system identification~\cite{MauGon16, BPK16, KNPNCS20, Kaiser21, ZZ23},
(g)~control theory~\cite{KorMez16, PeiKlul19, POR20, MMS20, BSH21, ZPSYX22}, and
(h)~graphs and networks~\cite{Djurdjevac12, RSH16, MFFS19, Sinha22, KD22, KT24}.
Koopman-based methods have also been applied to video data, EEG recordings, traffic flow data, stock prices, and various other data sets. The goal is always to identify characteristic dynamical properties of the underlying system from data.

\begin{figure}
    \centering
    \begin{tcolorbox}[width=0.49\linewidth, sidebyside=false, colback=white,colframe=black, nobeforeafter]
        \subfiguretitle{(a)}
        \includegraphics[width=\textwidth]{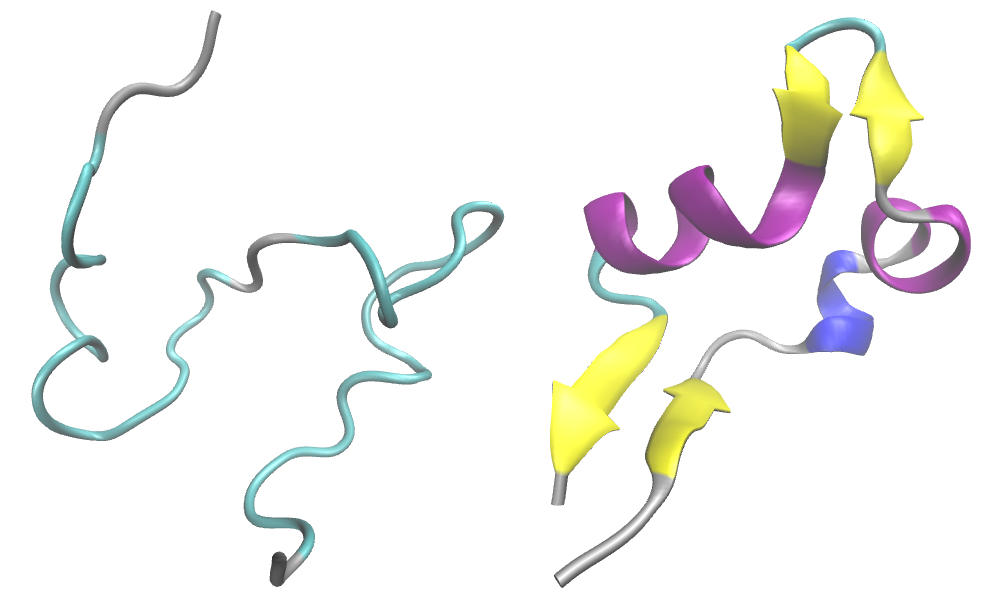}
    \end{tcolorbox} %
    \begin{tcolorbox}[width=0.49\linewidth, sidebyside=false, colback=white,colframe=black, nobeforeafter]
        \subfiguretitle{(b)}
        \includegraphics[width=\textwidth]{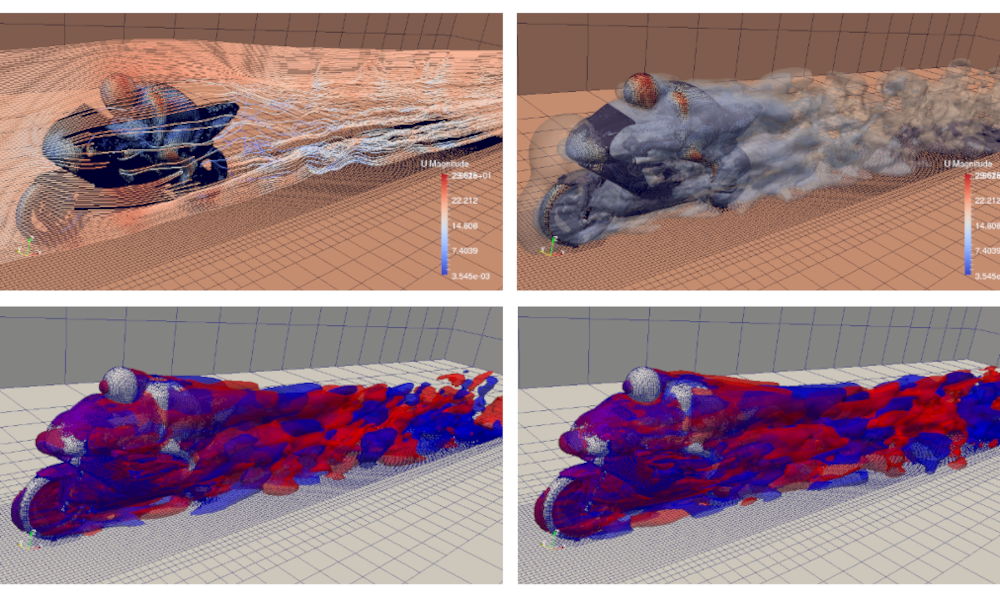}
    \end{tcolorbox} \\[0.5ex]
    \begin{tcolorbox}[width=0.49\linewidth, sidebyside=false, colback=white,colframe=black, nobeforeafter]
        \subfiguretitle{(c)}
        \includegraphics[width=\textwidth]{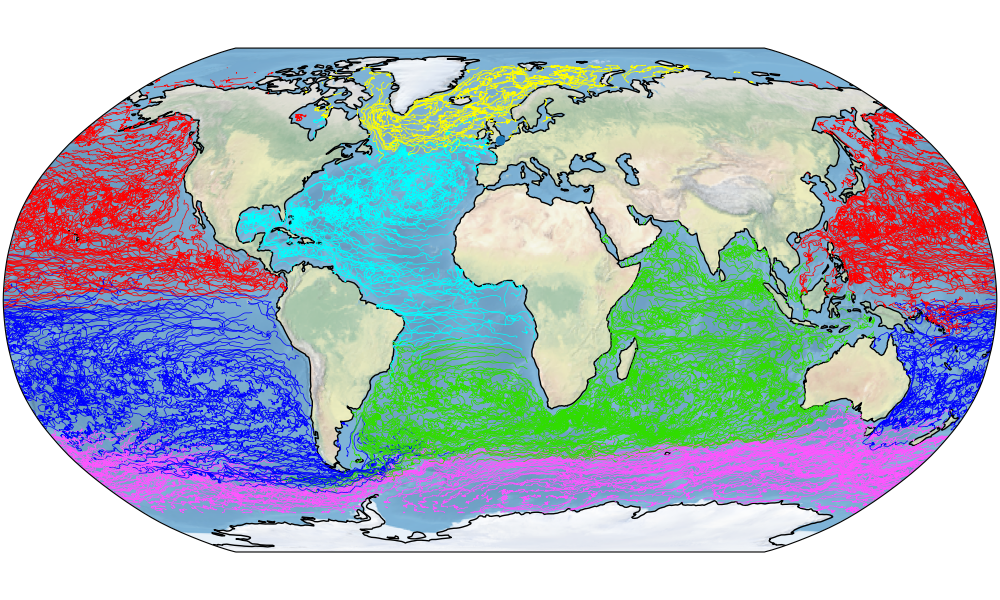}
    \end{tcolorbox}
    \begin{tcolorbox}[width=0.49\linewidth, sidebyside=false, colback=white,colframe=black, nobeforeafter]
        \subfiguretitle{(d)}
        \includegraphics[width=\textwidth]{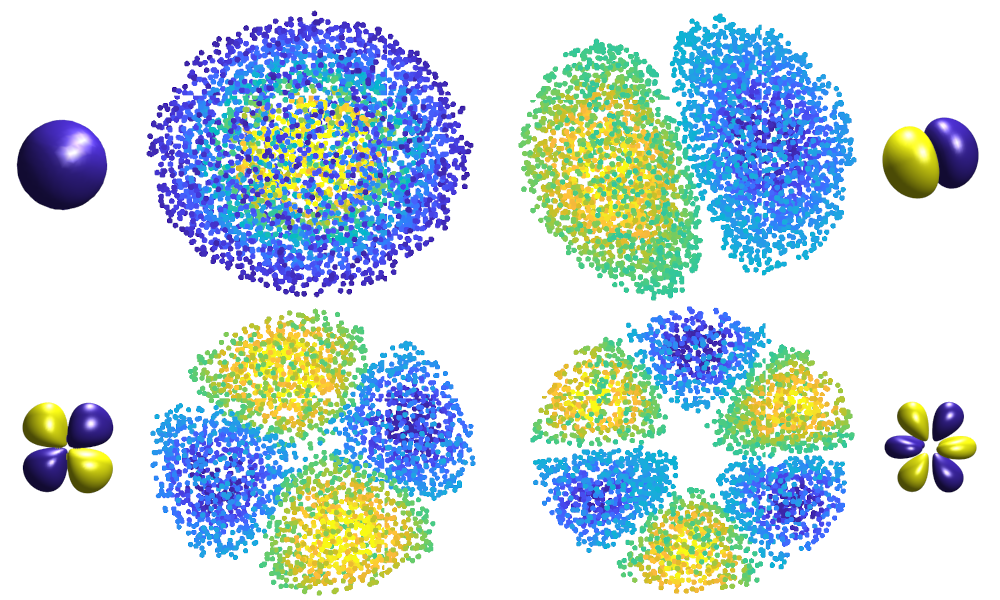}
    \end{tcolorbox} \\[0.5ex]
    \begin{tcolorbox}[width=0.49\linewidth, sidebyside=false, colback=white,colframe=black, nobeforeafter]
        \subfiguretitle{(e)}
        \includegraphics[width=\textwidth]{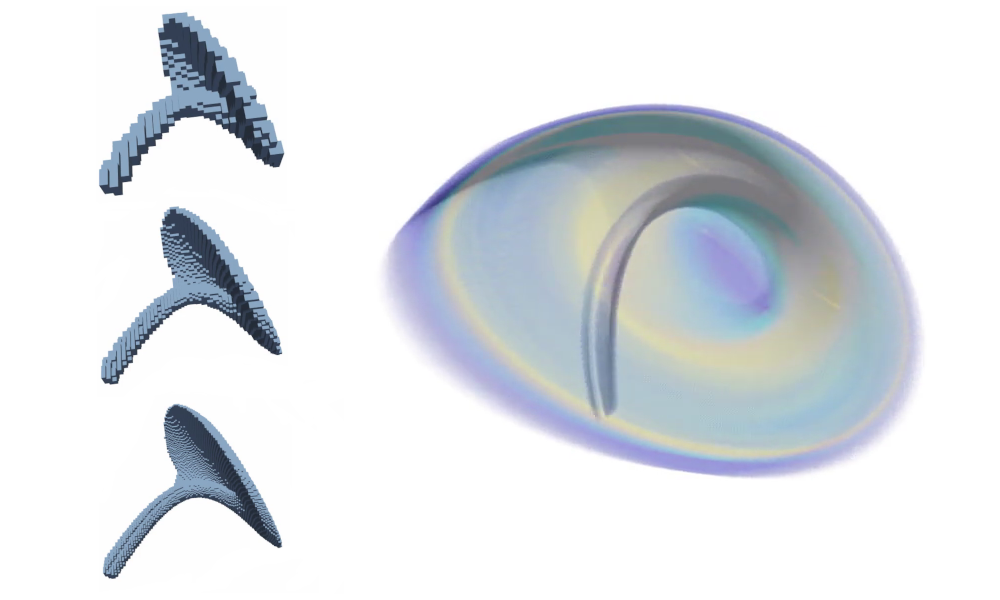}
    \end{tcolorbox}
    \begin{tcolorbox}[width=0.49\linewidth, sidebyside=false, colback=white,colframe=black, nobeforeafter]
        \subfiguretitle{(f)}
        \includegraphics[width=\textwidth]{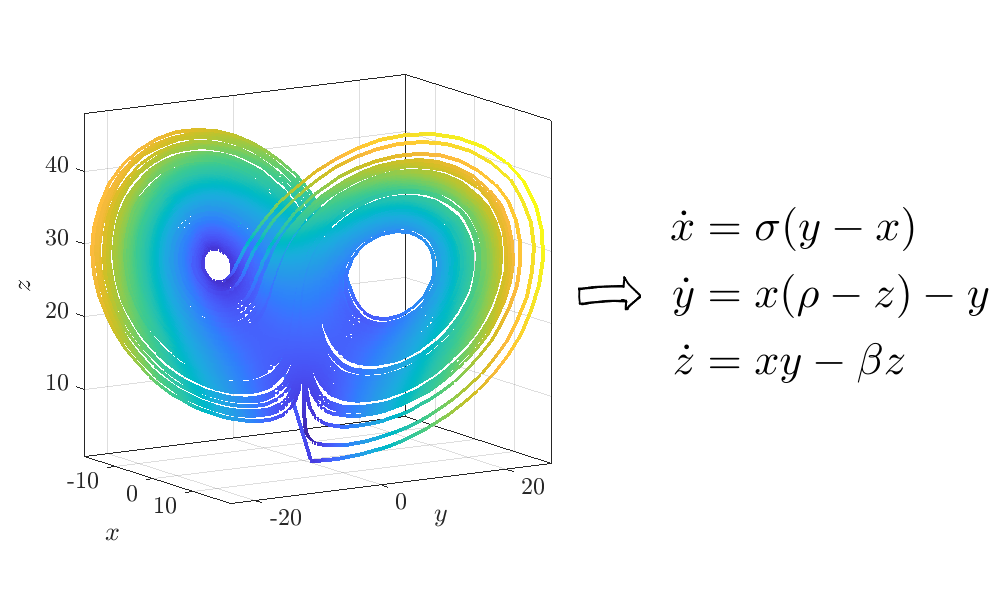}
    \end{tcolorbox} \\[0.5ex]
    \begin{tcolorbox}[width=0.49\linewidth, sidebyside=false, colback=white,colframe=black, nobeforeafter]
        \subfiguretitle{(g)}
        \includegraphics[width=\textwidth]{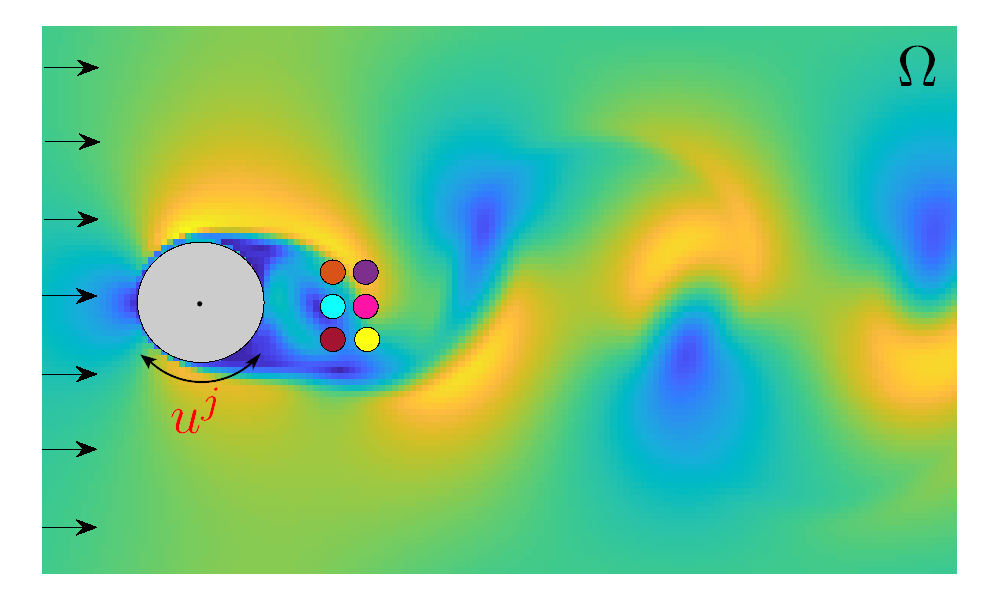}
    \end{tcolorbox}
    \begin{tcolorbox}[width=0.49\linewidth, sidebyside=false, colback=white,colframe=black, nobeforeafter]
        \subfiguretitle{(h)}
        \includegraphics[width=\textwidth]{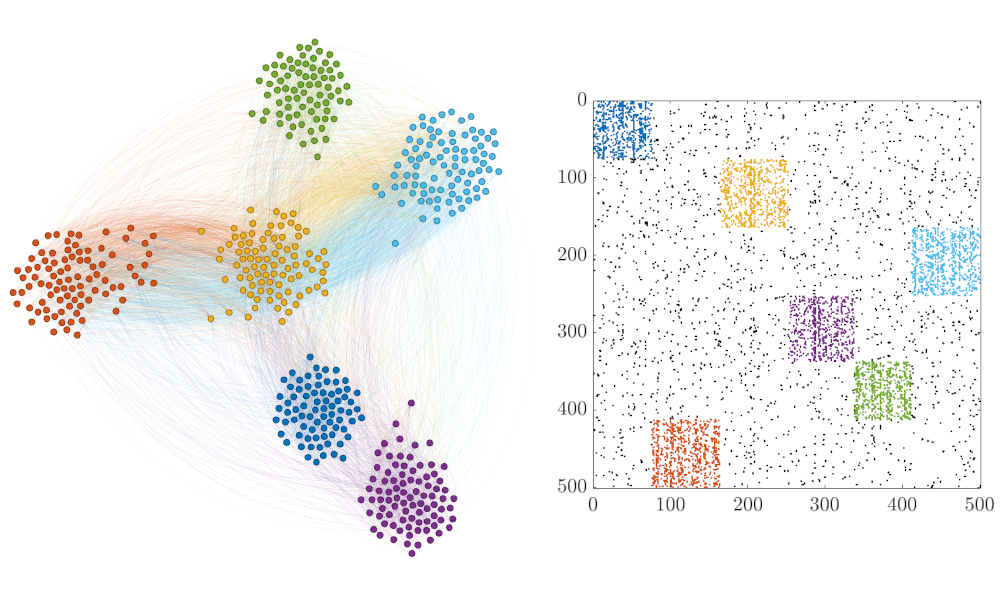}
    \end{tcolorbox}
    \caption{(a) Molecular dynamics: Folding properties of NTL9~\cite{KBSS18}. (b) Fluid dynamics: Decomposition into spatio-temporal patterns \cite{KGPS18}. (c)~Climate science: Detection of gyres in the oceans~\cite{KHMN19}. (d)~Quantum physics: Eigenstates of the hydrogen atom~\cite{KNH20}. (e)~Chaotic dynamical systems: Attractor of the Arneodo system~\cite{DKZ17}. (f)~System identification: Lorenz attractor \cite{BPK16}. (g)~Control theory: von Kármán vortex street~\cite{PeiKlul19}. (h)~Graphs and networks: Spectral clustering~\cite{KT24}.}
    \label{fig:transfer operator applications}
\end{figure}

In molecular dynamics, we are often interested in detecting slow processes such as the folding and unfolding of proteins \cite{Shaw11, SS13, PBB10, SKH23}. Conformations of molecules can be regarded as metastable states \cite{Davies82a, Davies82b, Bovier16}. Metastability implies that on short timescales the system appears to be equilibrated, but on longer timescales undergoes rare transitions between such metastable states. That is, the system will typically spend a long time in one part of the state space before it transitions to another region. Such metastable states are caused by deep wells in the energy landscape. Crossing the energy barrier from one well to another well is a rare event. Metastability is reflected in the spectrum of associated transfer operators, where the number of dominant eigenvalues corresponds to the number of metastable states. The eigenvalues are related to inherent timescales and the eigenfunctions contain information about the locations of the metastable states. Methods for detecting metastability also have important applications in climate science or agent-based modeling, see, for instance, \cite{Majda06, Serdukova16, Helfmann2021, WZShDjC21, NKS21, ZPIDjC23}.

In the same way, clusters in a graph can be interpreted as metastable states of a random walk process defined on the graph \cite{Luxburg07, Djurdjevac12}. A cluster is a set of vertices such that on short timescales the random walk process will, with a high probability, explore the cluster before moving to another part of the graph. Transitions between clusters are rare events and can be regarded as crossing an energy barrier caused by edges with low transition probabilities. Analogously, the cluster structure is reflected in the spectrum of associated graph Laplacians or, equivalently, transfer operators defined on graphs~\cite{KT24}. The number of dominant eigenvalues corresponds to the number of clusters and the associated eigenvectors contain information about the locations of the clusters. Over the last decades, many different clustering techniques based on properties of graph Laplacians have been developed, see, e.g., \cite{MS01, NgJorWei02, Verma03, Luxburg07, LaSch22}.

The decomposition into metastable states but also spectral clustering methods rely on the existence of a so-called \emph{spectral gap}. That is, we assume that there exist only a few isolated dominant eigenvalues close to one (or zero if we consider generators of transfer operators or graph Laplacians) and that the subsequent eigenvalues are significantly smaller. In practice, however, this is not necessarily the case. If there is no well-defined spectral gap, this typically indicates that the process is not metastable or that there are no weakly coupled clusters. Furthermore, the spectra of Koopman operators and graph Laplacians are in general only real-valued if the underlying stochastic process is reversible or, analogously, if the graph is undirected. Non-reversible processes and random walks on directed graphs typically result in complex-valued eigenvalues and eigenfunctions or eigenvectors, and conventional methods typically fail to identify meaningful slowly evolving spatio-temporal patterns or clusters. A generalization of metastable sets to non-reversible and time-inhomogeneous processes, called \emph{coherent sets} \cite{FrSaMo10, BaKo17, KWNS18, KHMN19}, is based on the forward--backward dynamics of the system and gives rise to self-adjoint operators---with respect to suitably weighted inner products---so that the spectrum becomes real-valued again. Coherent sets are regions of the state space that are only slowly dispersed by the dynamics and can be used to understand mixing properties of fluids. The notion of coherence has been extended to graphs in \cite{KD22, KT24}. The resulting spectral clustering methods can be used to identify clusters in directed and also time-evolving graphs.

The aim of this work is to illustrate how methods developed for the analysis of complex dynamical systems can be applied to graphs, but we will also turn the question around and ask how graph algorithms can be used to analyze dynamical systems in the hope that this will lead to cross-fertilization of the two scientific disciplines. The remainder of the article is structured as follows: We will first introduce transfer operators associated with stochastic dynamical systems and random walks on graphs in Section~\ref{sec:Stochastic processes, random walks, and transfer operators}, illustrate how these operators can be approximated and estimated from data in Section~\ref{sec:Approximation of transfer operators}, and then show that different types of dynamics induce different graph structures in Section~\ref{sec:Graph representations of stochastic processes}. Open problems and future work will be discussed in Section~\ref{sec:Conclusion}.

\section{Stochastic processes, random walks, and transfer operators}
\label{sec:Stochastic processes, random walks, and transfer operators}

We will briefly review the required dynamical systems and graph theory basics.

\subsection{Stochastic processes}

Given a stochastic process $ \{X_t\}_{t \ge 0} $ defined on a bounded state space $ \mathbb{X} \subset \R^d $ and any measurable set $ \mathbb{A} $, we assume there exists a transition density function $ p_{t,\tau} \colon \mathbb{X} \times \mathbb{X} \to \R_+ $ such that
\begin{equation*}
    \mathbb{P}\big[X_{t+\tau} \in \mathbb{A} \mid X_t = x\big] = \intop_\mathbb{A} p_{t,\tau}(x, y) \, \mathrm{d}y,
\end{equation*}
see \cite{KKS16} for a detailed derivation. That is, the function $ p_{t,\tau}(x, \cdot) $ describes the probability density of $ X_{t+\tau} $ given that $ X_t = x $. We will in particular consider processes governed by \emph{stochastic differential equations} (SDEs) of the form
\begin{equation} \label{eq:SDE}
    \mathrm{d}X_t = b(X_t, t) \ts \mathrm{d}t + \sigma(X_t, t) \ts \mathrm{d}W_t,
\end{equation}
where $ b \colon \R^d \times \R_+ \to \R^d $ is the drift term, $ \sigma \colon \R^d \times \R_+ \to \R^{d \times d} $ the diffusion term, and $ W_t $ a $ d $-dimensional Wiener process.

\begin{example} \label{ex:quadruple-well system}
The \emph{overdamped Langevin equation}
\begin{equation} \label{eq:overdamped Langevin}
    \mathrm{d}X_t = -\nabla V(X_t) \ts \mathrm{d}t + \sqrt{2 \beta^{-1}} \ts \mathrm{d}W_t
\end{equation}
plays an essential role in molecular dynamics. Here, $ V \colon \R^d \to \R $ is a given energy potential and $ \beta $ the inverse temperature. As a guiding example, we will consider the case $ d = 2 $ and define
\begin{equation*}
    V(x) = x_1^4 - \tfrac{1}{16} x_1^3 - 2 \ts x_1^2 + \tfrac{3}{16}x_1 + \tfrac{9}{8} + x_2^4 - \tfrac{1}{8} x_2^3 - 2 \ts x_2^2 + \tfrac{3}{8} x_2 + \tfrac{5}{4}
\end{equation*}
and set $ \beta = 3 $. This energy landscape has four wells and the deepest well $(-1,-1)$ is located in the lower left quadrant and the shallowest well $(1,1)$ in the upper right quadrant. The potential and trajectories generated using the Euler--Maruyama method with step size $ h = 10^{-3} $ are shown in Figure~\ref{fig:quadruple-well illustration}. \exampleSymbol

\begin{figure}
    \centering
    \begin{minipage}[t]{0.302\textwidth}
        \centering
        \subfiguretitle{(a)}
        \vspace*{1ex}
        \includegraphics[width=\textwidth]{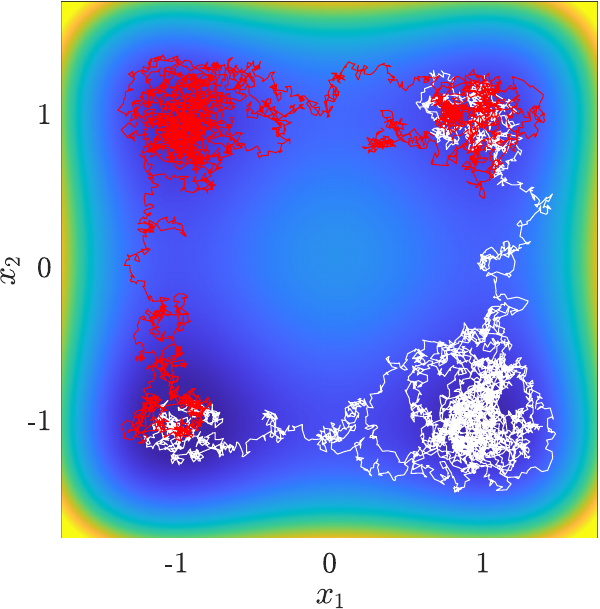}
    \end{minipage}
    \hspace*{4ex}
    \begin{minipage}[t]{0.37\textwidth}
        \centering
        \subfiguretitle{(b)}
        \vspace*{1ex}
        \includegraphics[width=\textwidth]{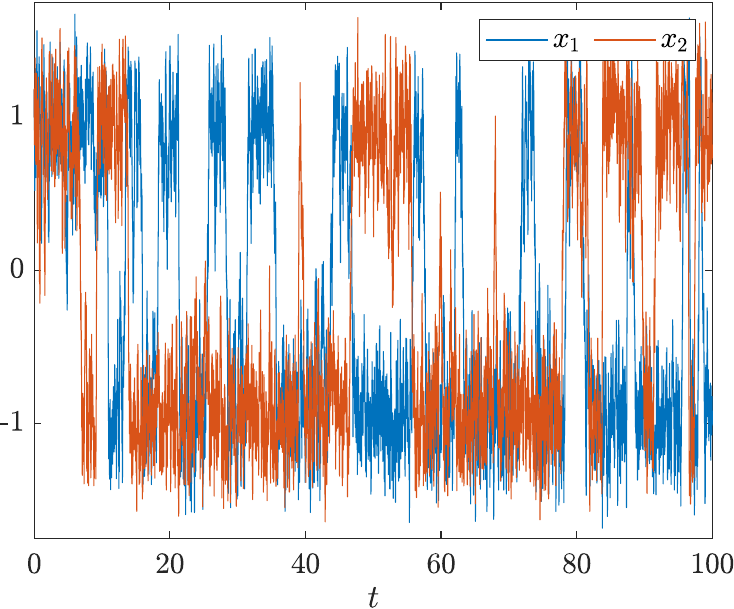}
    \end{minipage}
    \hspace*{2ex}
    \caption{(a) Quadruple-well potential and two short trajectories starting in the lower left and upper right wells. (b)~One long simulation of the stochastic differential equation. The system exhibits metastable behavior, that is, it will typically stay for a comparatively long time in one well before crossing the energy barrier and moving to another well.}
    \label{fig:quadruple-well illustration}
\end{figure}

\end{example}

Given a fixed lag time $ \tau $, considering the state only at multiples of~$ \tau $, i.e., $ X_0 $, $ X_{\tau} $, $ X_{2 \ts \tau} $, \dots, induces a discrete-time non-deterministic dynamical system $ \Theta_{t,\tau} $, given by $ \Theta_{t,\tau}(x) \sim p_{t,\tau}(x, \cdot) $. The lag time should be chosen in such a way that it is not too small (transitions of interest might not have occurred yet) and not too large (timescales of interest might be damped out and generating data will be computationally expensive).

A process is called \emph{time-homogeneous} if the transition probabilities from time $ t $ to time $ t + \tau $ depend only on the time difference $ \tau $ but not on $ t $ itself. This is, for instance, the case for the overdamped Langevin equation \eqref{eq:overdamped Langevin}. If the SDE is time-inhomogeneous, then also the transition densities and the transfer operators that we will introduce below are time-dependent. To simplify the notation, we will from now on omit this time-dependence and write $ p_\tau(x, y) $ instead of $ p_{t,\tau}(x, y) $, it will be clear from the context whether or not the system is time-homogeneous.

\subsection{Random walk processes on graphs}

A discrete-time stochastic process $ \{X_t\}_{t \in \mathbb{N}} $ on a discrete state space $\mathbb{X}$ is called a \emph{Markov chain} if it satisfies the \emph{Markov property}, which means that given a present state, the future and past jumps are independent of each other, i.e.,
\begin{equation*}
    \mathbb{P}[X_{t+1}=y \mid X_t = x, X_{t-1}=x_{t-1},\dots,X_0 =x_0] = \mathbb{P}[X_{t+1} = y \mid X_t = x],
\end{equation*}
with $ x_i, x, y \in \mathbb{X} $ and $ i = 0,...,{t-1} $. More rigorous definitions and detailed derivations can be found in \cite{Sarich2011}. For time-homogeneous Markov chains, i.e.,
\begin{equation*}
    \mathbb{P}[X_{t+1} = y \mid X_t = x] = \mathbb{P}[X_t = y \mid X_{t-1} = x] = \dots = \mathbb{P}[X_1 = y \mid X_0 = x],
\end{equation*}
we define a transition probability from state $ x $ to state $ y $ by
\begin{equation*}
    p(x,y) = \mathbb{P}[X_{t+1} = y \mid X_t = x]
\end{equation*}
for all $ x, y \in \mathbb{X} $. It holds that
\begin{equation*}
    p(x,y)\geq 0 \quad \text{and} \quad \sum_{y\in \mathbb{X}}p(x,y) = 1.
\end{equation*}
We will in particular consider Markov chains associated with graphs. Let $ \mc{G} = (\mc{X}, \mc{E}, w) $ be a \emph{weighted directed graph}, where $ \mc{X} = \{ \mc[1]{x}, \dots, \mc[\mc{n}]{x} \} $ is a set of $ \mc{n} $ vertices (also called nodes), $ \mc{E} \subseteq \mc{X} \times \mc{X} $ a set of edges (also called links), and $ w \colon \mc{X} \times \mc{X} \to \R_+ $ a \emph{weight function}. Here, $ w(\mc[i]{x}, \mc[j]{x}) > 0 $ is the weight of the edge $ (\mc[i]{x}, \mc[j]{x}) \in \mc{E} $ and $ w(\mc[i]{x}, \mc[j]{x}) = 0 $ if $ (\mc[i]{x}, \mc[j]{x}) \notin \mc{E} $. For a given graph $ \mc{G}$, we can then define a Markov chain as a \emph{random walk process} determined by the \emph{transition matrix} $ S \in \R^{\mc{n} \times \mc{n}} $, such that
\begin{equation} \label{eq:transition matrix}
    s_{ij} = p(\mc[i]{x}, \mc[j]{x}), \quad i, j = 1, \dots, \mc{n}.
\end{equation}
We will consider standard discrete-time random walks, where the transition probability of going from a vertex $ \mc[i]{x} $ to a vertex $ \mc[j]{x} $ is given by
\begin{equation} \label{eq:StRW}
    p(\mc[i]{x},\mc[j]{x}) = \frac{w(\mc[i]{x},\mc[j]{x})}{d(\mc[i]{x})}, \quad \text{with } d(\mc[i]{x}) = \sum_{\mc[j]{x} \in \mc{X}} w(\mc[i]{x},\mc[j]{x}).
\end{equation}
That is, $ d(\mc[i]{x}) $ is the weighted out-degree of vertex $ \mc[i]{x} $.

\begin{example} \label{ex:undirected graph}
In order to illustrate the close relationships between a continuous-time process defined on a continuous state space and random walks on a graph, we coarse-grain the quadruple-well problem introduced in Example~\ref{ex:quadruple-well system} by subdividing the domain $ \mathbb{X} = [-1.75, 1.75] \times [-1.75, 1.75] $ into $ 16 \times 16 $ equally sized boxes. Each box is represented by a vertex and the weight of an edge connecting two vertices is defined to be the transition probability between the corresponding boxes. This will be explained in more detail below. The random-walk process on the induced graph again exhibits metastable behavior as shown in Figure~\ref{fig:random walk illustration}. \exampleSymbol

\begin{figure}
    \centering
    \begin{minipage}[t]{0.30\textwidth}
        \centering
        \subfiguretitle{(a)}
        \vspace*{1ex}
        \includegraphics[width=\textwidth]{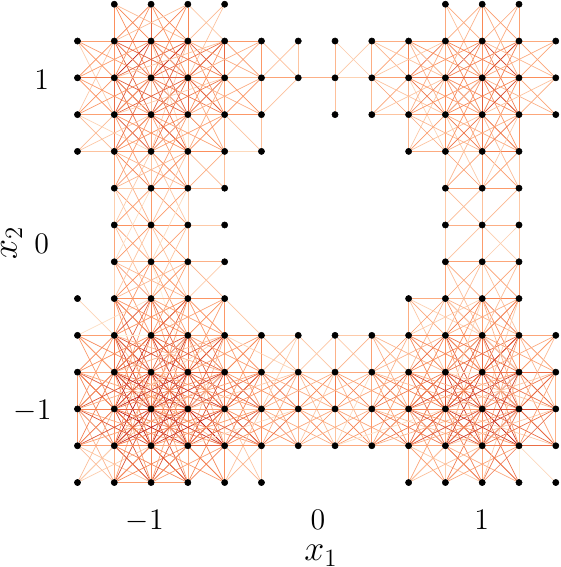}
    \end{minipage}
    \hspace*{4ex}
    \begin{minipage}[t]{0.37\textwidth}
        \centering
        \subfiguretitle{(b)}
        \vspace*{1ex}
        \includegraphics[width=\textwidth]{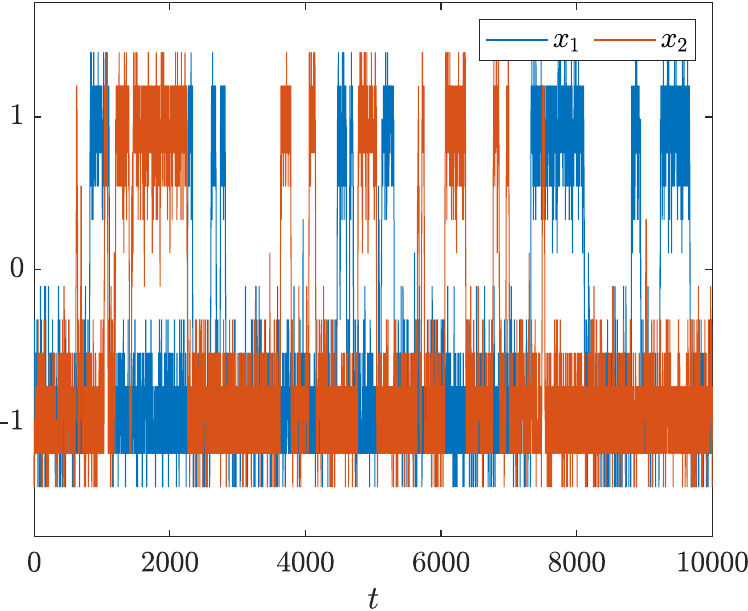}
    \end{minipage}
    \hspace*{2ex}
    \caption{(a) An undirected graph with four weakly coupled clusters based on the quadruple-well problem. Darker edges have larger edge weights. (b)~Random walk on the graph, where the locations of the nodes are used as coordinates. The process is metastable, i.e., a random walker will generally spend a long time in one cluster before moving to another one.}
    \label{fig:random walk illustration}
\end{figure}

\end{example}

This illustrates that metastable sets and clusters are closely related. The random walk process can be viewed as a stochastic differential equation discretized in time and space.

\subsection{Transfer operators}\label{sec:TransferOperators}

In what follows, we will often introduce two different variants of definitions, one for stochastic differential equations (shown on the left) and one for random walks on graphs (shown on the right).

\begin{definition}[Probability density]
A \emph{probability density} is a nonnegative function $ \mu $ satisfying
\begin{tcolorbox}[sidebyside align=bottom, after skip=10pt]
$
    \displaystyle \intop_\mathbb{X} \mu(x) \ts \mathrm{d}x = 1,
$
\tcblower
$
    \displaystyle \sum_{x \in \mc{X}} \mu(x) = 1.
$
\end{tcolorbox}
\end{definition}

Let $ L_\mu^q $ denote the $ \mu $-weighted $ L^q $-space of functions such that
\begin{tcolorbox}[sidebyside align=bottom]
$
    \displaystyle \norm{f}_{L_\mu^q} := \intop_\mathbb{X} \abs{f(x)}^q \mu(x) \ts \mathrm{d} x < \infty,
$
\tcblower
$
    \displaystyle \norm{f}_{L_\mu^q} := \sum_{x \in \mc{X}} \abs{f(x)}^q \mu(x) < \infty.
$
\end{tcolorbox}
Unweighted spaces will simply be denoted by $ L^q $. The $ \mu $-weighted duality pairing is defined by
\begin{tcolorbox}[sidebyside align=bottom]
$
    \displaystyle \innerprod{f}{g}_\mu = \intop_\mathbb{X} f(x) \ts g(x) \ts \mu(x) \ts \mathrm{d} x,
$
\tcblower
$
    \displaystyle \innerprod{f}{g}_\mu = \sum_{x \in \mc{X}} f(x) \ts g(x) \ts \mu(x).
$
\end{tcolorbox}

\begin{definition}[Transfer operators]
Let $ \rho $ be a probability density and $ f $ an observable.
Given a strictly positive initial density $ \mu $, assume that there exists $ u $ such that $ \rho = \mu \ts u $. We define the following \emph{transfer operators}:
\begin{tcolorbox}[sidebyside align=center]
\hspace*{-2ex}
\scalebox{0.91}{
$
\everymath={\displaystyle}
\arraycolsep=1.4pt\def\arraystretch{2}
\begin{array}{rl}
    \mathcal{P}^{\ts\tau} \rho(x) &= \!\intop_\mathbb{X}\! p_\tau(y, x) \ts \rho(y) \ts \mathrm{d}y, \\
    \mathcal{K}^{\ts\tau} f(x) &= \!\intop_\mathbb{X}\! p_\tau(x, y) \ts f(y) \ts \mathrm{d}y, \\
    \mathcal{T}^{\ts\tau} u(x) &= \frac{1}{\nu(x)} \!\intop_\mathbb{X}\! p_\tau(y, x) \ts \mu(y) \ts u(y) \ts \mathrm{d}y, \\
    \mathcal{F}^{\ts\tau} u(x) &= \!\intop_\mathbb{X}\! p_\tau(x, y) \frac{1}{\nu(y)} \!\intop_\mathbb{X}\! p_\tau(z, y) \ts \mu(z) \ts u(z) \ts \mathrm{d}z \ts \mathrm{d}y, \\
    \mathcal{B}^{\ts\tau} f(x) &= \frac{1}{\nu(x)} \!\intop_\mathbb{X}\! p_\tau(y, x) \ts \mu(y) \ts \!\intop_\mathbb{X}\! p_\tau(y, z) \ts f(z) \ts \mathrm{d}z \ts \mathrm{d}y,
\end{array}
$}
\tcblower
\scalebox{0.91}{
$
\everymath={\displaystyle}
\arraycolsep=1.4pt\def\arraystretch{2}
\begin{array}{rl}
    \mathcal{P} \rho(x) &= \sum_{y \in \mc{X}} p(y, x) \ts \rho(y), \vphantom{\intop_\mathbb{X}} \\
    \mathcal{K} f(x) &= \sum_{y \in \mc{X}} p(x, y) \ts f(y), \vphantom{\intop_\mathbb{X}} \\
    \mathcal{T} u(x) &= \frac{1}{\nu(x)} \sum_{y \in \mc{X}} p(y, x) \ts \mu(y) \ts u(y), \vphantom{\intop_\mathbb{X}} \\
    \mathcal{F} u(x) &= \sum_{y \in \mc{X}} p(x, y) \frac{1}{\nu(y)} \sum_{z \in \mc{X}} p(z, y) \ts \mu(z) \ts u(z), \vphantom{\intop_\mathbb{X}} \\
    \mathcal{B} f(x) &= \frac{1}{\nu(x)} \sum_{y \in \mc{X}} p(y, x) \ts \mu(y) \sum_{z \in \mc{X}} p(y, z) \ts f(z), \vphantom{\intop_\mathbb{X}}
\end{array}
$}
\end{tcolorbox}
\noindent where $ \nu = \mathcal{P} \mu $ is the resulting image density, i.e.,
\begin{tcolorbox}[sidebyside align=bottom, after skip=5pt]
\vspace*{1ex}
\scalebox{0.91}{
$
    \displaystyle \nu(x) = \intop_\mathbb{X} p_\tau(y, x) \ts \mu(y) \ts \mathrm{d}y,
$}
\tcblower
\scalebox{0.91}{
$
    \displaystyle \nu(x) = \sum_{y \in \mc{X}} p(y, x) \ts \mu(y),
$}
\end{tcolorbox}
\noindent also assumed to be strictly positive.
\end{definition}

Here, $ \mathcal{P} \colon L^2 \to L^2 $ is the \emph{Perron--Frobenius operator}, which describes the evolution of probability densities, and $ \mathcal{T} \colon L_\mu^2 \to L_\nu^2 $ a \emph{reweighted Perron--Frobenius operator}, which propagates probability densities w.r.t.\ the reference density $ \mu $. Further, $ \mathcal{K} \colon L^2 \to L^2 $ or $ \mathcal{K} \colon L_\nu^2 \to L_\mu^2 $, depending on whether we consider it to be the adjoint of $ \mathcal{P} $ or $ \mathcal{T} $, is the so-called \emph{stochastic Koopman operator}, which describes the evolution of observables.\!\footnote{Typically, $ \mathcal{P}^{\ts\tau} $ and $ \mathcal{K}^{\ts\tau} $ are first defined on $ L^1 $ and $ L^\infty $, respectively, but the operators can be extended to other function spaces $ L^q $ and $ L^{q\prime} $, with $ \frac{1}{q} + \frac{1}{q\prime} = 1 $, see \cite{BaRo95, KKS16} for more details.} The operators $ \mathcal{F} = \mathcal{K} \mathcal{T} \colon L_\mu^2 \to L_\mu^2 $ and $ \mathcal{B} = \mathcal{T} \mathcal{K} \colon L_\nu^2 \to L_\nu^2 $ are called \emph{forward--backward operator} and \emph{backward--forward operator}, respectively. Under mild conditions on the underlying system, see \cite{KWNS18}, the above operators are compact, which we assume to be the case in what follows. For a detailed introduction to transfer operators, see \cite{LaMa94, DJ99, Froyland13, KKS16, BaKo17}. The corresponding graph transfer operators on the right were introduced in \cite{KT24}. Since the state space is in this case finite-dimensional, all norms are equivalent. Note that the transfer operators on the left explicitly depend on the lag time~$ \tau $, whereas the operators on the right are associated with the discrete-time random walk process. When statements hold for the discrete case and the continuous case, we will sometimes omit the superscript~$ \tau $ to highlight properties these operators have in common.

\begin{example}
Let us illustrate the definitions of the transfer operators $ \mathcal{P}^\tau $ and $ \mathcal{K}^\tau $ using the quadruple-well problem introduced in Example~\ref{ex:quadruple-well system}. The Perron--Frobenius operator describes the evolution of probability densities and the Koopman operator the evolution of observables as shown in Figure~\ref{fig:transfer operator illustration}. \exampleSymbol

\begin{figure}
    \centering
    \begin{minipage}[t]{0.24\textwidth}
        \centering
        \subfiguretitle{(a) $ t = 0 $}
        \includegraphics[width=0.98\textwidth]{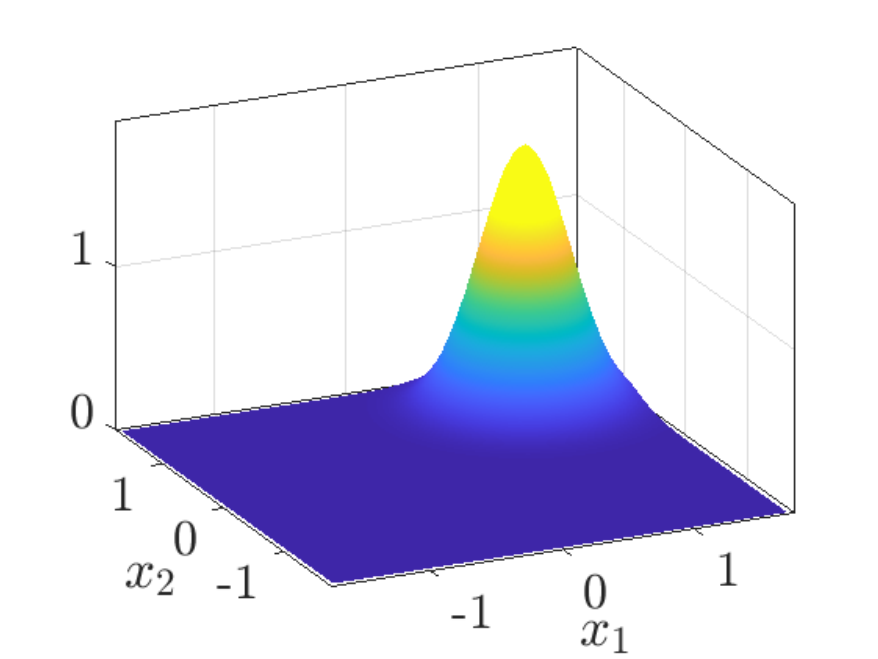}
    \end{minipage}
    \begin{minipage}[t]{0.24\textwidth}
        \centering
        \subfiguretitle{(b) $ t = 10 $}
        \includegraphics[width=0.98\textwidth]{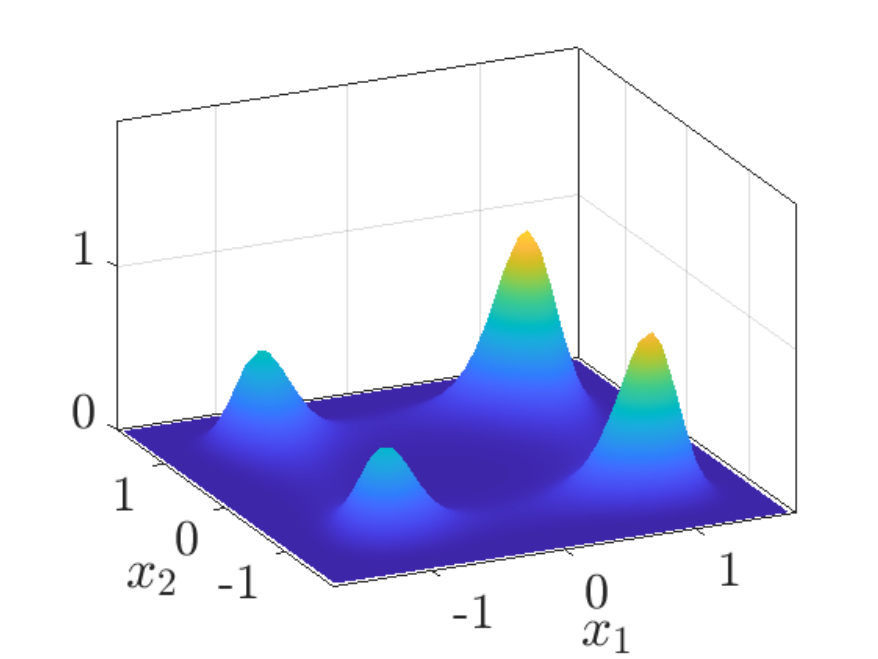}
    \end{minipage}
    \begin{minipage}[t]{0.24\textwidth}
        \centering
        \subfiguretitle{(c) $ t = 20 $}
        \includegraphics[width=0.98\textwidth]{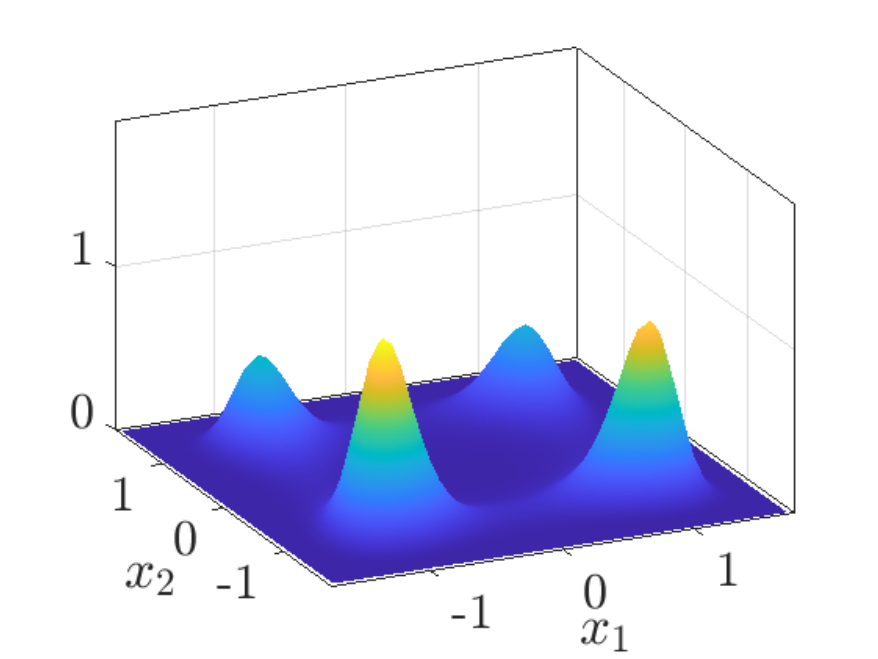}
    \end{minipage}
    \begin{minipage}[t]{0.24\textwidth}
        \centering
        \subfiguretitle{(d) $ t \to \infty $}
        \includegraphics[width=0.98\textwidth]{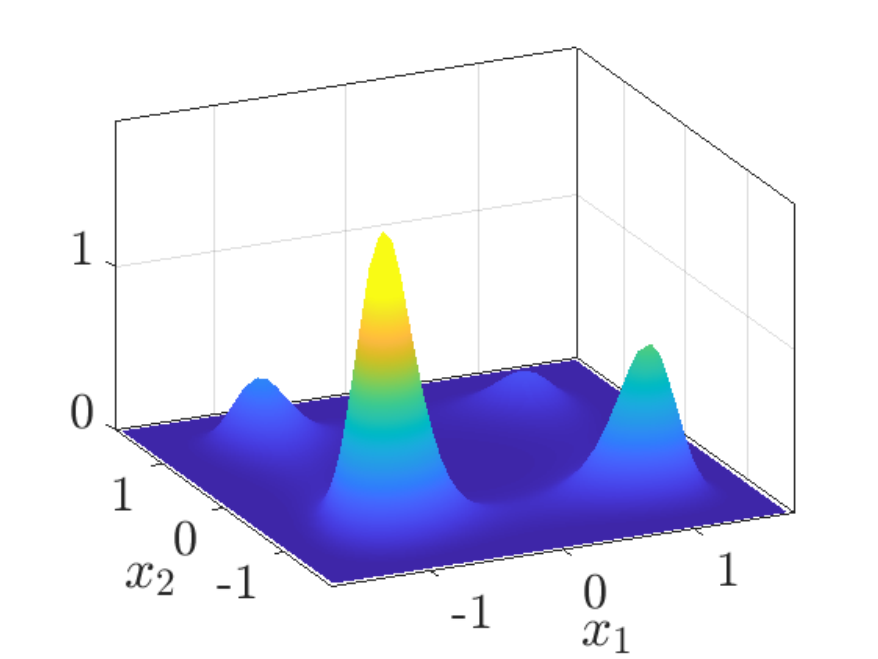}
    \end{minipage} \\[1.5ex]
    \begin{minipage}[t]{0.24\textwidth}
        \centering
        \subfiguretitle{(e) $ t = 0 $}
        \includegraphics[width=0.98\textwidth]{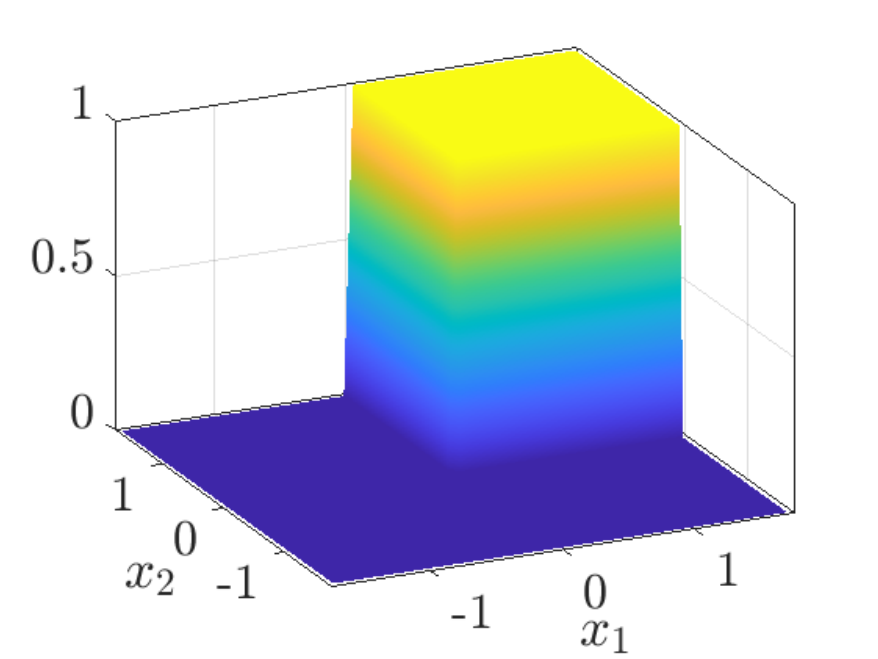}
    \end{minipage}
    \begin{minipage}[t]{0.24\textwidth}
        \centering
        \subfiguretitle{(f) $ t = 10 $}
        \includegraphics[width=0.98\textwidth]{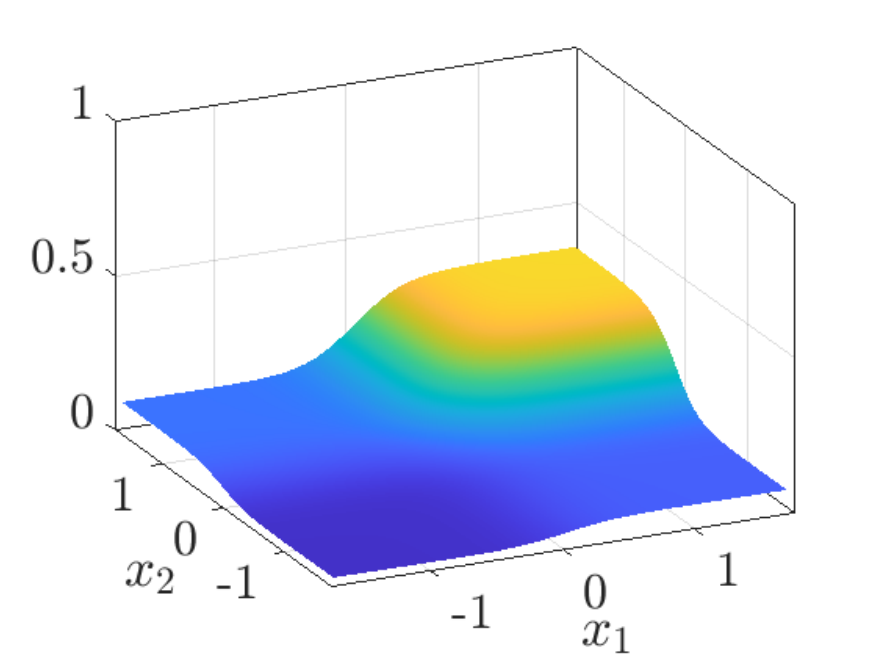}
    \end{minipage}
    \begin{minipage}[t]{0.24\textwidth}
        \centering
        \subfiguretitle{(g) $ t = 20 $}
        \includegraphics[width=0.98\textwidth]{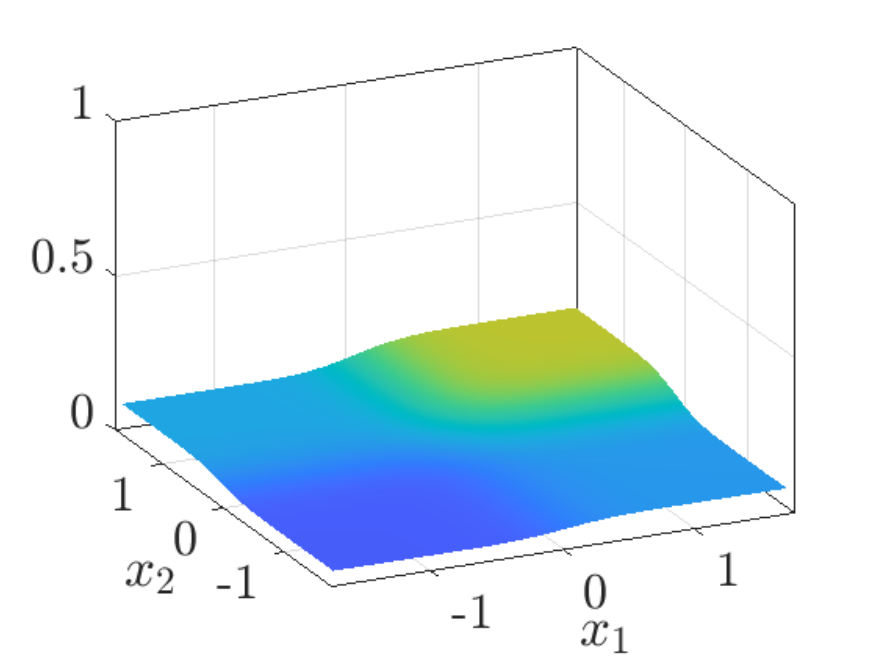}
    \end{minipage}
    \begin{minipage}[t]{0.24\textwidth}
        \centering
        \subfiguretitle{(h) $ t \to \infty $}
        \includegraphics[width=0.98\textwidth]{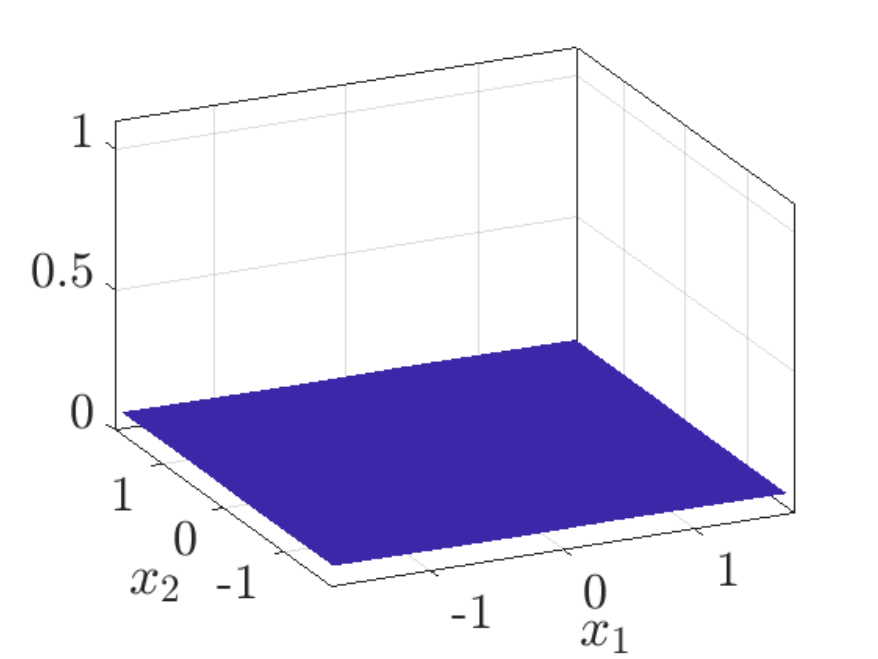}
    \end{minipage}
    \caption{(a)--(d) Illustration of the Perron--Frobenius operator. At the beginning, we assume that the system is in the upper right well with probability close to $ 1 $. With increasing time $ t $, the probability to be in the other wells increases and converges to the invariant density $ \pi $ (defined below). (e)--(h) Illustration of the Koopman operator. The observable $ f $ is the indicator function for the upper right quadrant. The Koopman operator describes the expected value of observing the system in this quadrant at time $ t $, given that it started in $ x = [x_1, x_2]^\top $. For $ t \to \infty $, $ \mathcal{K}^t f $ converges to a constant function. That is, the expected value---which is small here since most trajectories will leave the shallowest well---does not depend on the initial position anymore.}
    \label{fig:transfer operator illustration}
\end{figure}

\end{example}

Since the function spaces $ L_\mu^2 $ associated with graphs are $ \mc{n} $-dimensional, we can represent transfer operators by $ \mc{n} \times \mc{n} $ matrices. Let $ S \in \R^{\mc{n} \times \mc{n}} $ be the row-stochastic matrix \eqref{eq:transition matrix}  and define $ \boldsymbol{\rho} = [\rho(\mc[1]{x}), \dots, \rho(\mc[\mc{n}]{x})]^\top \in \R^{\mc{n}} $, then
\begin{equation*}
    \mathcal{P} \rho(\mc[i]{x}) = \big[S^\top \boldsymbol{\rho}\big]_i,
\end{equation*}
i.e., $ S^\top $ can be viewed as a matrix representation $ P $ of the Perron--Frobenius operator $ \mathcal{P} $. We obtain the matrix representations
\begin{align*}
    P = S^\top, \quad
    K = S, \quad
    T = D_\nu^{-1} \ts S^\top D_\mu, \quad
    F = S \ts D_\nu^{-1} \ts S^\top D_\mu, \quad
    B = D_\nu^{-1} \ts S^\top D_\mu \ts S
\end{align*}
of the corresponding operators, where
\begin{equation*}
    D_\mu = \diag\big(\mu(\mc[1]{x}), \dots, \mu(\mc[\mc{n}]{x})\big)
    \quad \text{and} \quad
    D_\nu = \diag\big(\nu(\mc[1]{x}), \dots, \nu(\mc[\mc{n}]{x})\big)
\end{equation*}
are invertible since we assumed the densities $ \mu $ and $ \nu $ to be strictly positive. Properties of transfer operators will be demonstrated in more detail in Section~\ref{sec:Graph representations of stochastic processes}.

\subsection{Infinitesimal generators}

In addition to analyzing spectral properties of transfer operators, we can also consider the associated infinitesimal generators. Given a time-homogeneous stochastic differential equation of the form \eqref{eq:SDE}, the Koopman operators $ \{ \mathcal{K}^{\ts\tau} \}_{\tau \ge 0} $ form a so-called one-parameter semigroup of operators.

\begin{definition}[Infinitesimal generator]
The \emph{infinitesimal generator} of the semigroup of operators is defined by
\begin{equation*}
    \mathcal{L} f = \lim_{\tau \rightarrow 0} \frac{1}{\tau} \left(\mathcal{K}^{\ts\tau} f - f \right),
\end{equation*}
see \cite{LaMa94} for more details.
\end{definition}

It can be shown, using It\^{o}'s lemma, that the infinitesimal generator associated with \eqref{eq:SDE} is given by
\begin{equation*}
    \mathcal{L} f = \sum_{i=1}^d b_i \ts \pd{f}{x_i} + \frac{1}{2} \sum_{i=1}^d \sum_{j=1}^d a_{ij} \ts \pd{^2 f}{x_i \ts \partial x_j},
\end{equation*}
where $ a = \sigma \ts \sigma^\top $. Its adjoint, the generator of the Perron--Frobenius operator, can be written as
\begin{equation*}
    \mathcal{L}^* \rho = -\sum_{i=1}^d \pd{(b_i \ts \rho)}{x_i}  + \frac{1}{2} \sum_{i=1}^d \sum_{j=1}^d \pd{^2 (a_{ij} \ts \rho)}{x_i \ts \partial x_j}.
\end{equation*}
The second-order partial differential equations $ \pd{f}{t} = \mathcal{L} f $ and $ \pd{\rho}{t} = \mathcal{L}^* \rho $ are called \emph{Kolmogorov backward equation} and \emph{Fokker--Planck equation}, respectively. Due to the spectral mapping theorem, see, e.g., \cite{Pazy83}, if $ \lambda $ is an eigenvalue of the generator, then $ e^{\lambda \ts \tau} $ is an eigenvalue of the corresponding operator with lag time $ \tau $ and the eigenfunctions are identical.

\begin{definition}[Probability flux] \label{def:probability flux}
We can also write
\begin{equation*}
    \mathcal{L}^* \rho = -\nabla \vdot J(\rho), \quad \text{with } J(\rho) = b \ts \rho - \frac{1}{2} \nabla \vdot (a \ts \rho),
\end{equation*}
where $ J $ is called the \emph{probability flux} \cite{Pav14}.
\end{definition}

\begin{example}
The infinitesimal generators associated with the Langevin equation \eqref{eq:overdamped Langevin} can be written as
\begin{equation*}
    \mathcal{L} f = -\nabla V \vdot \nabla f + \beta^{-1} \Delta f
    \quad \text{and} \quad
    \mathcal{L}^* \rho = \Delta V \ts \rho + \nabla V \vdot \nabla \rho + \beta^{-1} \Delta \rho.
\end{equation*}
The probability flux is given by $ J(\rho) = -\nabla V \ts \rho - \beta^{-1} \nabla \rho $. \exampleSymbol
\end{example}

A discrete analogue of the Kolmogorov backward equation for random-walk processes on graphs is the system of linear ordinary differential equations
\begin{equation*}
    \frac{\mathrm{d}}{\mathrm{d}t} f = L \ts f, ~~ \text{where} ~~ L(x, y) \ge 0 ~~ \forall x \neq y \quad \text{and} \quad L(x, x) = -\sum_{\mathclap{y\in \mc{X}\setminus\{x\}}} L(x, y),
\end{equation*}
which describes consensus dynamics, see, e.g., \cite{Saber03}. Here, $ L $ is often called a \emph{rate matrix}, where the off-diagonal entries $ L(x, y) $, with $ x \neq y $, represent the \emph{transition rates}, i.e., the average number of transitions from $ x $ to $ y $ per time unit. Similarly, the values $ -L(x, x) $ correspond to the \emph{escape rates} that determine the waiting times of a Markov jump process; the expected waiting time in a node $ x $ is $|{L(x, x)}|^{-1}$. The rate matrix $ L $ generates a whole family of transition matrices
\begin{equation*}
    S^\tau = \exp(\tau \ts L), \quad \tau \geq 0.
\end{equation*}
Analogously, the differential equation
\begin{equation*}
    \frac{\mathrm{d}}{\mathrm{d}t} \rho = L^* \ts \rho
\end{equation*}
can be regarded as a continuous-time version of a random walk on a graph and corresponds to the Fokker--Planck equation, see \cite{LaSch22}. In the field of spectral graph theory, specific choices of the matrix $ L $ have been shown to capture important graph characteristics, e.g., the combinatorial graph Laplacian or the normalized graph Laplacian. We establish a connection with the \textit{random-walk graph Laplacian} that is defined as
\begin{equation*}
    L_{\text{rw}}(x, y) = \begin{cases} 1, & x = y, \\ -\frac{w(x, y)}{d(x)}, & x \neq y, (x, y)\in\mc{E}, \\ 0, & \text{otherwise}, \end{cases}
\end{equation*}
so that $ L = -L_{\text{rw}} $ is a rate matrix. Here, the expected waiting time in a node is proportional to its degree, which implies that the process is slower in nodes with high degrees, such as densely connected clusters. Other variants of time-continuous random walks have been considered in \cite{Djurdjevac12}.

Since it holds that $ L_{\text{rw}} = I - K $, computing the largest eigenvalues of the matrix $ K $ is equivalent to computing the smallest eigenvalues of the random-walk graph Laplacian. This shows that conventional spectral clustering relies on the dominant eigenfunctions of the Koopman operator and can be interpreted in terms of metastable sets. A natural generalization of spectral clustering to directed and time-evolving graphs is thus to use the forward--backward operator so that the resulting clusters can be viewed as coherent sets. We call the matrix $ L_{\text{fb}} = I - F $ \emph{forward--backward Laplacian}. Detailed derivations and applications can be found in \cite{KD22, KT24}.

\subsection{Invariant density and reversibility}\label{subsec:InvariantDistr}

In what follows, we will consider the reweighted  Perron--Frobenius operator $ \mathcal{T} $ with respect to different probability densities. A particularly important role is played by the invariant density.

\begin{definition}[Invariant density]
A density $ \pi $ is called \emph{invariant} if $ \mathcal{P} \pi = \pi $. That is, $ \pi $ is an eigenfunction of the Perron--Frobenius operator corresponding to the eigenvalue $ \lambda = 1 $.
\end{definition}

\begin{example}
\setlength{\belowdisplayskip}{4pt} Let us consider two simple examples:
\begin{enumerate}[leftmargin=3.5ex, itemsep=0ex, topsep=0.5ex, label=\roman*)]
\item The invariant density of the overdamped Langevin equation \eqref{eq:overdamped Langevin} is given by
\begin{equation*}
    \pi(x) = \frac{1}{Z} e^{-\beta \ts V(x)},
    \quad \text{with } Z = \intop_\mathbb{X} e^{-\beta \ts V(x)} \ts \mathrm{d}x,
\end{equation*}
since it can be easily shown that $ J(\pi) = 0 $ and thus $ \mathcal{L}^* \pi = -\nabla \vdot J(\pi) = 0 $. That is, $ \pi $ is an eigenfunction of $ \mathcal{L}^* $ associated with the eigenvalue $ \lambda = 0 $ and hence an eigenfunction of $ \mathcal{P}^{\ts\tau} $ associated with the eigenvalue $ \lambda = 1 $.
\item Given a connected undirected graph $ \mc{G} $, the invariant density is given by
\begin{equation*}
    \pi(x) = \frac{1}{Z} d(x),
    \quad \text{with } Z = \sum_{x \in \mc{X}} d(x),
\end{equation*}
since
\begin{equation*}
    \mathcal{P} \pi(x) = \sum_{y \in \mc{X}} p(y, x) \ts \pi(y) = \sum_{y \in \mc{X}} \frac{w(y, x)}{d(y)} \frac{1}{Z} d(y) = \frac{1}{Z} \sum_{y \in \mc{X}} w(x, y) = \frac{1}{Z} d(x) = \pi(x),
\end{equation*}
where we used the fact that $ w(x, y) = w(y, x) $ since the graph is undirected. \exampleSymbol
\end{enumerate}
\end{example}

If the reference density $ \mu $ is the invariant density, i.e., $ \mu = \pi $, this immediately implies that $ \nu = \pi $ and the operator $ \mathcal{T} $ can be written as
\begin{tcolorbox}[sidebyside align=bottom]
$
    \displaystyle \mathcal{T}^{\ts\tau} u(x) = \frac{1}{\pi(x)} \intop_\mathbb{X} p_\tau(y, x) \ts \pi(y) \ts u(y) \ts \mathrm{d} y,
$
\tcblower
$
    \displaystyle \mathcal{T} u(x) = \frac{1}{\pi(x)} \sum_{y \in \mc{X}} p(y, x) \ts \pi(y) \ts u(y).
$
\end{tcolorbox}
The Perron--Frobenius operator reweighted with respect to the invariant density $ \pi $ will be considered in more detail below.

\begin{definition}[Reversibility]
The process is called \emph{reversible} if there exists a probability density $ \pi $ that satisfies the \emph{detailed balance condition}
\begin{tcolorbox}[sidebyside align=bottom, after skip=10pt]
$
    \pi(x) \ts p_{\tau}(x, y) = \pi(y) \ts p_{\tau}(y,x) ~~ \forall x, y \in \mathbb{X},
$
\tcblower
$
    \pi(x) \ts p(x, y) = \pi(y) \ts p(y, x) ~~ \forall x, y \in \mc{X}.
$
\end{tcolorbox}
\end{definition}

\begin{proposition}
A stochastic process with generator $ \mathcal{L} $ is reversible if and only if the stationary probability flux $ J(\pi) $ vanishes \cite{Pav14}.
\end{proposition}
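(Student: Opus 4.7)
The plan is to reformulate reversibility as self-adjointness of the generator on the weighted space $L^2_\pi$, and then to split $\mathcal{L}^*(\pi \ts \cdot)$ into symmetric and antisymmetric pieces so that $J(\pi)$ appears as the antisymmetric part. First, integrating the detailed balance identity against smooth test functions $f,g$ yields $\langle \mathcal{K}^{\ts\tau} f, g\rangle_\pi = \langle f, \mathcal{K}^{\ts\tau} g\rangle_\pi$; in particular, taking $f \equiv 1$ also shows $\mathcal{P}^{\ts\tau} \pi = \pi$, so $\pi$ is invariant. Differentiating the symmetry at $\tau = 0$ gives $\langle \mathcal{L} f, g\rangle_\pi = \langle f, \mathcal{L} g\rangle_\pi$, i.e.\ symmetry of $\mathcal{L}$ in $L^2_\pi$, which is the pointwise identity
\begin{equation*}
    \mathcal{L} u \;=\; \frac{1}{\pi} \ts \mathcal{L}^*(\pi \ts u)
\end{equation*}
on a dense core.

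Next, I would substitute $\rho = \pi \ts u$ into $\mathcal{L}^* \rho = -\nabla \vdot J(\rho)$. The product rule $\nabla \vdot (a \ts \pi \ts u) = u \ts \nabla \vdot (a \ts \pi) + \pi \ts a \nabla u$ gives
\begin{equation*}
    J(\pi \ts u) \;=\; u \ts J(\pi) \;-\; \tfrac{1}{2} \ts \pi \ts a \ts \nabla u,
\end{equation*}
and taking the divergence,
\begin{equation*}
    \mathcal{L}^*(\pi \ts u) \;=\; -\, u \, \nabla \vdot J(\pi) \;-\; J(\pi) \vdot \nabla u \;+\; \tfrac{1}{2} \ts \nabla \vdot (\pi \ts a \ts \nabla u).
\end{equation*}
After dividing by $\pi$, the first term is a multiplication operator, and the third, when paired with a test function $v$ in $\langle \cdot, \cdot\rangle_\pi$, becomes the symmetric Dirichlet form $-\tfrac{1}{2}\intop (\nabla v)^\top a \ts (\nabla u) \ts \pi \, \mathrm{d}x$ (using symmetry of $a$); both are therefore self-adjoint in $L^2_\pi$. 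The middle term, after one further integration by parts and using $\nabla \vdot J(\pi) = 0$, is skew-adjoint in $L^2_\pi$.

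Both directions then follow cleanly. Reversibility makes $\mathcal{L}$ symmetric in $L^2_\pi$, so the skew-adjoint middle piece must vanish: $J(\pi) \vdot \nabla u \equiv 0$ for every smooth compactly supported $u$, which forces $J(\pi) \equiv 0$. Conversely, $J(\pi) = 0$ implies $\mathcal{L}^* \pi = -\nabla \vdot J(\pi) = 0$ (so $\pi$ is invariant), kills the first and middle terms, and leaves only the Dirichlet piece; self-adjointness of $\mathcal{L}$ on $L^2_\pi$ then re-integrates to the detailed balance identity for $p_\tau$.

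The main obstacle is not the algebraic decomposition but its functional-analytic justification: converting detailed balance at all $\tau$ into symmetry of $\mathcal{L}$ requires a dense core inside the domain of $\mathcal{L}$ and enough $\tau$-smoothness of the semigroup to differentiate; the two integrations by parts require sufficient regularity of $\pi$, $b$, $a$ and either no-flux, periodic, or $\pi$-decay conditions on $\partial \mathbb{X}$ to discard boundary terms; and the conclusion $J(\pi) \vdot \nabla u \equiv 0 \Rightarrow J(\pi) \equiv 0$ needs the test-function class to be rich enough to separate vector fields pointwise. All of these hypotheses are implicit in the compactness and regularity assumptions already invoked in the definition of the transfer operators.
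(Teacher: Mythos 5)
The paper does not actually prove this proposition---it is stated as a known result and delegated entirely to the cited reference [Pav14]---so there is no in-paper argument to compare against. Your sketch is essentially the standard proof from that reference, and its computational core is correct: the identities $J(\pi \ts u) = u \ts J(\pi) - \tfrac{1}{2}\pi \ts a \nabla u$ and $\mathcal{L}^*(\pi u) = -u\,\nabla\cdot J(\pi) - J(\pi)\cdot\nabla u + \tfrac{1}{2}\nabla\cdot(\pi a \nabla u)$ check out, the Dirichlet-form term and the multiplication term are indeed symmetric in $L^2_\pi$, the first-order term is skew once $\nabla\cdot J(\pi)=0$ (which you correctly extract from detailed balance via $f\equiv 1$ before using it), and the uniqueness of the symmetric/skew decomposition then forces $J(\pi)\cdot\nabla u\equiv 0$ and hence $J(\pi)\equiv 0$ by choosing $u$ with prescribed gradient at any given point. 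The one place where you are relying on more than "implicit regularity" is the passage between the two formulations of reversibility: detailed balance of $p_\tau$ for all $\tau>0$ versus symmetry of $\mathcal{L}$ on a core. The forward direction (differentiate at $\tau=0$) is harmless, but the converse---symmetry of the differential operator on $C_c^\infty$ implying self-adjointness of the semigroup and hence the pointwise kernel identity---requires essential self-adjointness of $\mathcal{L}$ on that core, which is a genuine hypothesis (failing it, one can have symmetric generators with non-symmetric Markov extensions) and is not covered by the compactness assumption on $\mathcal{K}^{\ts\tau}$. You flag this honestly, so I would only insist that it be stated as an assumption rather than waved at; with that caveat the argument is sound and is, as far as one can tell, the same route the cited source takes.
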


\begin{example} \label{ex:reversible systems}
Let us again consider the guiding examples:
\begin{enumerate}[leftmargin=3.5ex, itemsep=0ex, topsep=0.5ex, label=\roman*)]
\item We have seen above that the probability flux $ J(\pi) $ for the overdamped Langevin equation is identically zero so that the process is reversible. Note that this is a stronger property than $ \mathcal{L}^* \pi = -\nabla \vdot J(\pi) = 0 $, which only implies that the probability flux is divergence-free.
\item A random walk process on an undirected graph is reversible since
\begin{equation*}
    \pi(x) \ts p(x, y) = \frac{1}{Z} d(x) \frac{w(x, y)}{d(x)} = \frac{1}{Z} d(y) \frac{w(y, x)}{d(y)} = \pi(y) \ts p(y, x). \tag*{\exampleSymbol}
\end{equation*}
\end{enumerate}
\end{example}

Using the detailed balance condition, we have
\begin{tcolorbox}
\setlength{\abovedisplayskip}{0pt}
\setlength{\belowdisplayskip}{0pt}
\begin{align*}
    \mathcal{T}_\tau f(x)
        &= \frac{1}{\pi(x)} \intop_\mathbb{X} p_{\tau}(y, x) \ts \pi(y) \ts f(y)\,\mathrm{d}y \\
        &= \intop_\mathbb{X} p_\tau(x, y) \ts f(y) \ts \mathrm{d} y \\
        &= \mathcal{K}_\tau f(x),
\end{align*}
\tcblower
\setlength{\abovedisplayskip}{0pt}
\setlength{\belowdisplayskip}{0pt}
\begin{align*}
    \mathcal{T} f(x)
            &= \frac{1}{\pi(x)} \sum_{y \in \mc{X}} p(y, x) \ts \pi(y) \ts f(y) \\
            &= \sum_{y \in \mc{X}} p(x, y) \ts f(y) \\
            &= \mathcal{K} f(x).
\end{align*}
\end{tcolorbox}
This shows that for reversible dynamical systems the transfer operator $ \mathcal{T} $ reweighted with respect to the invariant density $ \pi $ is identical to the Koopman operator $ \mathcal{K} $. Furthermore, we obtain
\begin{align*}
    \innerprod{\mathcal{P}f}{g}_{\pi^{-1}} = \innerprod{f}{\mathcal{P} g}_{\pi^{-1}}
    \quad \text{and} \quad
    \innerprod{\mathcal{K}f}{g}_\pi = \innerprod{f}{\mathcal{K}g}_\pi.
\end{align*}
That is, if the stochastic process is reversible, $ \mathcal{P} $ and $ \mathcal{K} $ are self-adjoint, which, assuming the operators are compact, implies that the eigenvalues are real-valued and the eigenfunctions form an orthogonal basis with respect to the respective reweighted inner products \cite{SKH23}. Metastability is then reflected in the spectral properties of transfer operators. Information about metastable sets is encoded in the eigenfunctions $ \varphi_\ell $ corresponding to the dominant eigenvalues $ \lambda_\ell \approx 1 $ since then $ \mathcal{K} \varphi_\ell = \lambda_\ell \ts \varphi_\ell \approx \varphi_\ell $. Dynamics associated with small eigenvalues, on the other hand, are damped out quickly. For more details on metastability, see Appendix \ref{app:Metastability} and the references therein.

\subsection{Non-reversible processes}

One approach to accelerate the convergence of a process to the invariant density is to depart from reversible dynamics \cite{LNP13}. The degree or irreversibility of the system can be quantified through the entropy production rate, which has been studied both for non-equilibrium Markov processes \cite{jiang2003} as well as for directed graphs \cite{Lambiotte24} and their cycle decompositions \cite{DjConradBanischSchuette15}.

\begin{example}
Let us modify the examples considered above:
\begin{enumerate}[wide, labelwidth=!, labelindent=0pt, itemsep=0ex, topsep=0.5ex, label=\roman*)]
\item A simple way to construct a non-reversible system is to add a divergence-free term (with respect to the invariant distribution), e.g.,
\begin{equation} \label{eq:non-reversible Langevin}
    \mathrm{d}X_t = (-\nabla V(X_t) + M \ts \nabla V(X_t)) \ts \mathrm{d}t + \sqrt{2 \beta^{-1}} \ts \mathrm{d}W_t,
\end{equation}
where $ M $ is an antisymmetric matrix. The invariant density of this SDE is again $ \pi(x) = \frac{1}{Z} e^{-\beta \ts V(x)} $ since the probability flux $ J(\pi) = \frac{1}{Z} M \ts \nabla V \ts e^{-\beta \ts V} $ is divergence-free, but it does not vanish. That is, the process admits an invariant distribution, but is not reversible.
\item Given an undirected graph $ \mc{G} = (\mc{X}, \mc{E}, w) $ with invariant density $ \pi $, it is possible to construct a directed graph $ \widetilde{\mc{G}} = (\mc{X}, \widetilde{\mc{E}}, \widetilde{w}) $ with the same invariant density by defining
\begin{equation*}
    \widetilde{w}(x, y) = w(x, y) + m(x, y), \quad \text{with }
    \sum_{x \in \mc{X}} m(x, y) = \sum_{y \in \mc{X}} m(x, y) = 0,
\end{equation*}
since then $ \widetilde{d}(x) = d(x) $ and $ \widetilde{p}(x, y) = p(x, y) + \frac{m(x, y)}{d(x)} $ so that
\begin{equation*}
    \widetilde{\mathcal{P}} \pi(x) = \sum_{y \in \mc{X}} \widetilde{p}(y, x) \ts \pi(y) = \sum_{y \in \mc{X}} p(y, x) \ts \pi(y) + \sum_{y \in \mc{X}} \frac{m(y, x)}{d(y)} \ts \pi(y) = \pi(x),
\end{equation*}
where $ \widetilde{\mathcal{P}} $ is the Perron--Frobenius operator associated with the modified graph $ \widetilde{\mc{G}} $. The second sum is zero as $ \frac{\pi(y)}{d(y)} = \frac{1}{Z} $ and the $ m(y, x) $ terms sum up to zero. \exampleSymbol
\end{enumerate}
\end{example}

If the stochastic differential equation is non-reversible or, analogously, the graph is not undirected, then the eigenvalues of the Perron--Frobenius and Koopman operators are in general complex-valued and conventional methods to detect metastable sets or clusters typically fail, see \cite{DjCWS16, KD22} for more details. To circumvent this problem, a common approach is to consider eigenvalues and eigenfunctions of the forward--backward and backward--forward operators instead.

\begin{theorem} \label{thm:spectral properties}
The eigenvalues of $ \mathcal{F} $ and $ \mathcal{B} $ are contained in $ [0, 1] $.
\end{theorem}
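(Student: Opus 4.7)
The plan is to recognize that $\mathcal{F}$ and $\mathcal{B}$ are of the form $A A^*$ and $A^* A$ for a contractive operator $A$, and then invoke standard functional-analytic facts. I will work out the continuous case; the discrete case is identical with integrals replaced by sums.

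First I would establish the adjoint relation $\mathcal{T}^{\ts\tau} = (\mathcal{K}^{\ts\tau})^*$ where $\mathcal{K}^{\ts\tau}\colon L_\nu^2 \to L_\mu^2$. Using Fubini and the definition of $\nu$, for $f\in L_\nu^2$ and $g\in L_\mu^2$,
\begin{equation*}
    \innerprod{\mathcal{K}^{\ts\tau} f}{g}_\mu
    = \intop_\mathbb{X}\!\intop_\mathbb{X} p_\tau(x,y)\,f(y)\,g(x)\,\mu(x)\,\mathrm{d}x\,\mathrm{d}y
    = \intop_\mathbb{X} f(y)\,\nu(y)\,\underbrace{\tfrac{1}{\nu(y)}\!\intop_\mathbb{X} p_\tau(x,y)\,\mu(x)\,g(x)\,\mathrm{d}x}_{=\,\mathcal{T}^{\ts\tau} g(y)}\,\mathrm{d}y
    = \innerprod{f}{\mathcal{T}^{\ts\tau} g}_\nu.
\end{equation*}
Consequently, $\mathcal{F}^{\ts\tau} = \mathcal{K}^{\ts\tau}(\mathcal{K}^{\ts\tau})^*$ on $L_\mu^2$ and $\mathcal{B}^{\ts\tau} = (\mathcal{K}^{\ts\tau})^*\mathcal{K}^{\ts\tau}$ on $L_\nu^2$, so both are self-adjoint. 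Moreover, they are positive semi-definite: for $u \in L_\mu^2$,
\begin{equation*}
    \innerprod{\mathcal{F}^{\ts\tau} u}{u}_\mu
    = \innerprod{\mathcal{K}^{\ts\tau}\mathcal{T}^{\ts\tau} u}{u}_\mu
    = \innerprod{\mathcal{T}^{\ts\tau} u}{\mathcal{T}^{\ts\tau} u}_\nu
    = \norm{\mathcal{T}^{\ts\tau} u}_{L_\nu^2}^2 \ge 0,
\end{equation*}
and analogously $\innerprod{\mathcal{B}^{\ts\tau} v}{v}_\nu = \norm{\mathcal{K}^{\ts\tau} v}_{L_\mu^2}^2 \ge 0$. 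This already gives the lower bound $0$ on the eigenvalues.

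For the upper bound, I would show that $\mathcal{K}^{\ts\tau}\colon L_\nu^2 \to L_\mu^2$ is a contraction. Since $p_\tau(x,\cdot)$ is a probability density, Jensen's (or Cauchy--Schwarz) inequality applied pointwise gives $\abs{\mathcal{K}^{\ts\tau} f(x)}^2 \le \mathcal{K}^{\ts\tau}(\abs{f}^2)(x)$, and integrating against $\mu$ together with $\int p_\tau(x,y)\,\mu(x)\,\mathrm{d}x = \nu(y)$ yields
\begin{equation*}
    \norm{\mathcal{K}^{\ts\tau} f}_{L_\mu^2}^2
    \le \intop_\mathbb{X}\!\intop_\mathbb{X} p_\tau(x,y)\,\abs{f(y)}^2\,\mathrm{d}y\,\mu(x)\,\mathrm{d}x
    = \intop_\mathbb{X} \abs{f(y)}^2\,\nu(y)\,\mathrm{d}y
    = \norm{f}_{L_\nu^2}^2.
\end{equation*}
Hence $\norm{\mathcal{K}^{\ts\tau}} \le 1$ and by duality $\norm{\mathcal{T}^{\ts\tau}} = \norm{(\mathcal{K}^{\ts\tau})^*} \le 1$, so $\norm{\mathcal{F}^{\ts\tau}}, \norm{\mathcal{B}^{\ts\tau}} \le 1$.

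Combining both pieces, $\mathcal{F}^{\ts\tau}$ and $\mathcal{B}^{\ts\tau}$ are self-adjoint, positive semi-definite operators of operator norm at most one; their spectra therefore lie in $[0,1]$, and since they are compact (as stated in the paper), every non-zero spectral value is an eigenvalue, giving the claim. The main obstacle is bookkeeping: one must carefully track which of the four possible weighted inner products is being used, since the contraction of $\mathcal{K}^{\ts\tau}$ and its adjointness with $\mathcal{T}^{\ts\tau}$ both rely crucially on the precise weighting by $\mu$ and $\nu$; once the identity $\mathcal{T}^{\ts\tau} = (\mathcal{K}^{\ts\tau})^*$ is in hand, the rest is routine.
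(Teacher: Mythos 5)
Your proof is correct and takes essentially the same route as the paper: it factors $\mathcal{F} = \mathcal{K}\mathcal{T}$ and $\mathcal{B} = \mathcal{T}\mathcal{K}$ using the adjoint identity of Lemma~\ref{lem:adjointness}, obtains self-adjointness and positive semi-definiteness exactly as in Corollary~\ref{cor:self-adjoint and positive definite}, and combines this with the operator-norm bound $\norm{\mathcal{F}}, \norm{\mathcal{B}} \le 1$ as in Lemma~\ref{lem:spectral radius}. The only (minor) difference is that where the paper cites the literature for $\norm{\mathcal{K}^{\ts\tau}} = \norm{\mathcal{T}^{\ts\tau}} \le 1$, you prove the contraction property directly via a pointwise Jensen inequality and the identity $\intop_\mathbb{X} p_\tau(x,y)\,\mu(x)\,\mathrm{d}x = \nu(y)$, which makes that step self-contained without changing the structure of the argument.
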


\begin{proof}
The proof requires auxiliary results that are contained in Appendix~\ref{app:Properties of transfer operators}. Corollary~\ref{cor:self-adjoint and positive definite} implies that the eigenvalues of $ \mathcal{F} $ and $ \mathcal{B} $ are real-valued and non-negative. Combining this with Lemma~\ref{lem:spectral radius} concludes the proof.
\end{proof}

A set $ A $ is called \emph{coherent} from time $ t $ to $ t + \tau $ if $ \Theta_{t,\tau} (\Theta_{t+\tau,-\tau} (A)) \cong A $. That is, the set $ A $ is almost invariant under the forward--backward dynamics. In order to detect such coherent sets, we compute eigenfunctions $ \varphi_\ell $ corresponding to large eigenvalues $ \lambda_\ell \approx 1 $ of the forward--backward operator $ \mathcal{F} $ so that $ \mathcal{F} \varphi_\ell = \lambda_\ell \ts \varphi_\ell \approx \varphi_\ell $. Let $ \psi_\ell := \frac{1}{\sqrt{\lambda_\ell}} \mathcal{T} \varphi_\ell $, then
\begin{equation*}
    \mathcal{K} \psi_\ell = \sqrt{\lambda_\ell} \ts \varphi_\ell, \quad
    \mathcal{T} \varphi_\ell = \sqrt{\lambda_\ell} \ts \psi_\ell, \quad
    \mathcal{F} \varphi_\ell = \lambda_\ell \ts \varphi_\ell, \quad
    \mathcal{B} \psi_\ell = \lambda_\ell \ts \psi_\ell.
\end{equation*}
This shows that the eigenfunctions of the forward--backward operator can also be interpreted as the singular functions of the reweighted Perron--Frobenius operator and the eigenfunctions of the backward--forward operator as the singular functions of the Koopman operator. More details about singular value decompositions of compact transfer operators can be found in \cite{MSKS20}.

\section{Approximation of transfer operators}
\label{sec:Approximation of transfer operators}

Transfer operators associated with stochastic processes defined on continuous state spaces are infinite-dimensional. In practice, however, we have to restrict ourselves to finite-dimensional subspaces. Furthermore, we also typically have to estimate approximations of these operators from data. Graph transfer operators, on the other hand, are already finite-dimensional and can be easily computed from the adjacency matrices.

\subsection{Galerkin approximation of transfer operators}

Let $ \{ \phi_i \}_{i=1}^n $ be a set of $ n $ linearly independent basis functions---e.g., monomials, radial basis functions, trigonometric functions, or indicator functions---spanning an $ n $-dimensional subspace of the domain of an arbitrary operator $ \mathcal{A} $ that we want to approximate and
\begin{equation*}
    \phi(x) = [\phi_1(x), \dots, \phi_n(x)]^\top \in \R^n.
\end{equation*}
That is, any function $ f $ in this subspace can be written as
\begin{equation*}
    f(x) = \sum_{i=1}^n c_i \ts \phi_i(x) = c^\top \phi(x),
\end{equation*}
where $ c = [c_1, \dots, c_n]^\top \in \R^n $. The \emph{Galerkin projection} $ \mathcal{A}_\phi $ of the operator $ \mathcal{A} $ onto the $ n $-dimensional subspace is defined by the matrix
\begin{equation*}
    A_\phi = \big[G_0(\mu)\big]^{-1} G_1({\mathcal{A}, \mu}) \in \R^{n \times n},
\end{equation*}
with entries
\begin{equation*}
    \big[G_0(\mu)\big]_{ij} = \innerprod{\phi_i}{\phi_j}_\mu
    \quad \text{and} \quad
    \big[G_1(\mathcal{A}, \mu)\big]_{ij} = \innerprod{\phi_i}{\mathcal{A} \phi_j}_\mu.
\end{equation*}
We then define the projected operator
\begin{equation*}
    \mathcal{A}_\phi f(x) := (A_\phi \ts c)^\top \phi(x).
\end{equation*}
Eigenfunctions of $ \mathcal{A} $ can hence be approximated by eigenfunctions of $ \mathcal{A}_\phi $. Let $ \xi_\ell $ be an eigenvector of $ A_\phi $ with associated eigenvalue $ \lambda_\ell $, then defining $ \varphi_\ell(x) = \xi_\ell^\top \phi(x) $ yields
\begin{equation*}
    \mathcal{A}_\phi \varphi_\ell(x) = (A_\phi \ts \xi_\ell)^\top \phi(x) = \lambda_\ell \ts \xi_\ell^\top \phi(x) = \lambda_\ell \ts \varphi_\ell(x).
\end{equation*}
Lemma~\ref{lem:adjointness} immediately implies that $ G_1(\mathcal{T}, \nu) = G_1(\mathcal{K}, \mu)^\top $ since
\begin{equation*}
    \big[G_1(\mathcal{T}, \nu)\big]_{ij} = \innerprod{\phi_i}{\mathcal{T} \phi_j}_\nu = \innerprod{\mathcal{K} \phi_i}{\phi_j}_\mu = \big[G_1(\mathcal{K}, \mu)\big]_{ji}.
\end{equation*}

\begin{remark}
Although the state space of graph transfer operators is already finite-dimensional and we can easily construct the matrix representations of the operators, a Galerkin approximation can still be used to project the dynamics given by the random walk process onto a lower-dimensional space. This reduces the computational complexity of the resulting eigenvalue problems. A open problem, however, is how to choose the basis functions in such a way that the dominant spectrum and thus the cluster structure is retained, see \cite{KT24} for further details and examples.
\end{remark}

\subsection{Data-driven approximation of transfer operators}

Typically, the integrals required for the Galerkin approximation are estimated from trajectory data using Monte Carlo integration. Given training data $ \big\{ (x^{(k)}, y^{(k)}) \big\}_{k=1}^m $, where $ x^{(k)} $ is sampled from the distribution $ \mu $ and $ y^{(k)} = \Theta_\tau\big(x^{(k)}\big) $ is hence sampled from the distribution~$ \nu $, we define
\begin{equation*}
    \Phi_x = \begin{bmatrix} \phi(x^{(1)}), & \phi(x^{(2)}), & \dots, & \phi(x^{(m)}) \end{bmatrix}, \quad
    \Phi_y = \begin{bmatrix} \phi(y^{(1)}), & \phi(y^{(2)}), & \dots, & \phi(y^{(m)}) \end{bmatrix},
\end{equation*}
and the matrices
\begin{alignat*}{2}
    C_{xx} &= \frac{1}{m} \Phi_x \ts \Phi_x^\top, &\qquad
    C_{yy} &= \frac{1}{m} \Phi_y \ts \Phi_y^\top, \\
    C_{xy} &= \frac{1}{m} \Phi_x \ts \Phi_y^\top, &
    C_{yx} &= \frac{1}{m} \Phi_y \ts \Phi_x^\top
\end{alignat*}
so that
\begin{alignat*}{2}
    \big[C_{xx}\big]_{ij} &= \frac{1}{m} \sum_{k=1}^m \phi_i(x_k) \ts \phi_j(x_k) &&\underset{\scriptscriptstyle m \rightarrow \infty}{\longrightarrow} \innerprod{\phi_i}{\phi_j}_\mu = \big[G_0(\mu)\big]_{ij}, \\
    \big[C_{yy}\big]_{ij} &= \frac{1}{m} \sum_{k=1}^m \phi_i(y_k) \ts \phi_j(y_k) &&\underset{\scriptscriptstyle m \rightarrow \infty}{\longrightarrow} \innerprod{\phi_i}{\phi_j}_\nu = \big[G_0(\nu)\big]_{ij} \\
    \big[C_{xy}\big]_{ij} &= \frac{1}{m} \sum_{k=1}^m \phi_i(x_k) \ts \phi_j(y_k) &&\underset{\scriptscriptstyle m \rightarrow \infty}{\longrightarrow}
    \innerprod{\phi_i}{\mathcal{K}^{\ts\tau} \phi_j}_\mu = \big[G_1(\mathcal{\mathcal{K}^{\ts\tau}}, \mu)\big]_{ij}, \\
    \big[C_{yx}\big]_{ij} &= \frac{1}{m} \sum_{k=1}^m \phi_i(y_k) \ts \phi_j(x_k) &&\underset{\scriptscriptstyle m \rightarrow \infty}{\longrightarrow} \innerprod{\mathcal{K}^{\ts\tau} \phi_i}{\phi_j}_\mu = \big[G_1(\mathcal{\mathcal{T}^{\ts\tau}}, \nu)\big]_{ij},
\end{alignat*}
see also \cite{SKH23}. That is, we can approximate the Galerkin projections of $ \mathcal{K} $ and $ \mathcal{T} $ and hence also of $ \mathcal{F} $ and $ \mathcal{B} $ from data. The matrix representations of the estimated Galerkin projections are
\begin{equation*}
    K_\phi \approx C_{xx}^{-1} C_{xy}, \quad
    T_\phi \approx C_{yy}^{-1} C_{yx}, \quad
    F_\phi \approx C_{xx}^{-1} C_{xy} \ts C_{yy}^{-1} C_{yx}, \quad
    B_\phi \approx C_{yy}^{-1} C_{yx} \ts C_{xx}^{-1} C_{xy}.
\end{equation*}
This data-driven estimation of the Koopman operator, later generalized to other transfer operators, is called \emph{extended dynamic mode decomposition} (EDMD) \cite{WKR15, KKS16}.

\begin{remark}
Over the last years, a host of different methods for approximating transfer operators from trajectory data have been proposed. EDMD is a nonlinear variant of \emph{dynamic mode decomposition} \cite{Schmid10}, which is closely related to \emph{time-lagged independent component analysis} \cite{MS94} and \emph{linear inverse modeling} \cite{Penland96}. In order to mitigate the curse of dimensionality---high-dimensional systems typically require a large set of basis functions---, kernel-based extensions \cite{WRK15, KSM19}, tensor-based variants \cite{Nueske2016, NGKC21}, and deep learning approaches \cite{YKH19, MPWN18} have been proposed. The optimal choice of basis functions, kernel, or neural network architecture is strongly problem-dependent and often requires tuning the hyperparameters. Poorly chosen ansatz spaces can lead to ill-conditioned matrices and spectral pollution. A detailed overview and comparison can be found in \cite{SKH23, Colbrook23}.
\end{remark}

\subsection{Ulam's method}

One of the simplest and also most popular approaches for approximating transfer operators is \emph{Ulam's method} \cite{Ulam60}, which can be regarded as a Galerkin projection of the operator onto a finite-dimensional subspace spanned by indicator functions \cite{KKS16}. Let $ \{ \mathbb{B}_1, \dots, \mathbb{B}_n \} $ be a decomposition of $ \mathbb{X} \subset \R^d $ into $ n $ disjoint sets (e.g., a box discretization of the domain), i.e.,
\begin{equation}\label{eq:fullPart}
    \bigcup_{i=1}^n \mathbb{B}_i = \mathbb{X}
    \quad \text{and} \quad
    \mathbb{B}_i \cap \mathbb{B}_j = \varnothing ~ \forall i \ne j,
\end{equation}
and $ \phi_i := \mathds{1}_{\mathbb{B}_i} $ the corresponding indicator functions defined by
\begin{equation*}
    \mathds{1}_{\mathbb{B}_i}(x) =
    \begin{cases}
        1, & x \in \mathbb{B}_i, \\
        0, & \text{otherwise}.
    \end{cases}
\end{equation*}
The resulting subspace spanned by these basis functions consists of piecewise constant functions. As the sets $ \mathbb{B}_i $ are disjoint by definition, $ G_0(\mu) $ is a diagonal matrix, whose entries are identical to the row sums of the matrix $ G_1(\mathcal{K}^{\ts\tau}, \mu) $. That is, $ K_n^{\ts\tau} = \big[G_0(\mu)\big]^{-1} G_1(\mathcal{K}^{\ts\tau}, \mu) $ is a row-stochastic matrix containing the transition probabilities between the disjoint sets. If we view the matrix $ G_1(\mathcal{K}^{\ts\tau}, \mu) $ as a weighted adjacency matrix of an \emph{induced graph}, then the matrix $ K_n^{\ts\tau} $ is the transition probability matrix~$ S $. Instead of analyzing trajectories of the time-continuous stochastic process, we then consider the corresponding coarse-grained map defined by the discrete random walk process on the induced graph as illustrated in Example~\ref{ex:undirected graph}. Note that the graph structure depends on the lag time~$ \tau $.

\section{Graph representations of stochastic processes}
\label{sec:Graph representations of stochastic processes}

In this section, we will analyze properties of transfer operators associated with different types of dynamics and their induced graph representations.

\subsection{Time-homogeneous reversible processes and undirected graphs}

We first restrict ourselves to time-homogeneous reversible stochastic processes such as the overdamped Langevin dynamics and random walks on undirected graphs (see Example~\ref{ex:reversible systems}). Approximating the Koopman operator $ \mathcal{K}^{\ts\tau} $ with respect to the $ \pi $-weighted inner product using Ulam's method, the matrix $ G_1(\mathcal{K}^\tau, \pi) $ is symmetric since
\begin{equation*}
    \big[G_1(\mathcal{K}^\tau, \pi)\big]_{ij} = \innerprod{\phi_i}{\mathcal{K}^\tau \phi_j}_\pi = \innerprod{\mathcal{K}^\tau \phi_i}{\phi_j}_\pi = \big[G_1(\mathcal{K}^\tau, \pi)\big]_{ji}
\end{equation*}
and the associated induced graph is automatically undirected.

\begin{example} \label{ex:reversible quadruple-well problem}

\begin{figure}
    \centering
    \begin{minipage}[t]{0.24\textwidth}
        \centering
        \subfiguretitle{(a)}
        \vspace*{1ex}
        \includegraphics[width=0.95\textwidth]{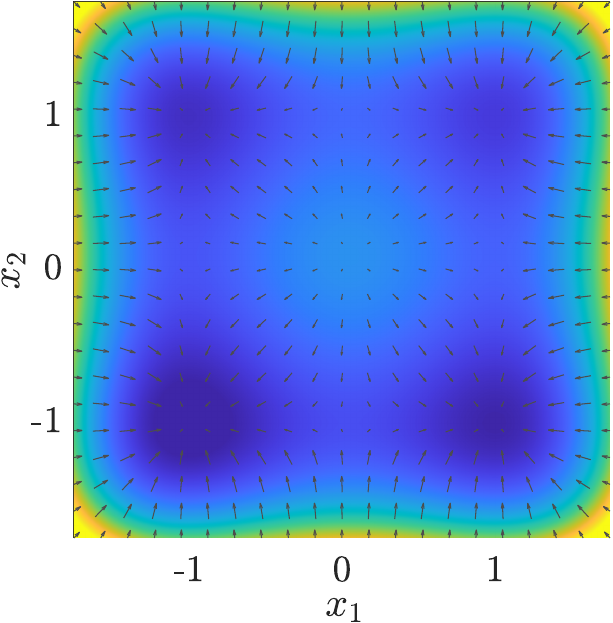}
    \end{minipage}
    \begin{minipage}[t]{0.24\textwidth}
        \centering
        \subfiguretitle{(b)}
        \vspace*{1ex}
        \includegraphics[width=0.95\textwidth]{ReversibleSDE_graph}
    \end{minipage}
    \begin{minipage}[t]{0.24\textwidth}
        \centering
        \subfiguretitle{(c)}
        \vspace*{1ex}
        \includegraphics[width=0.95\textwidth]{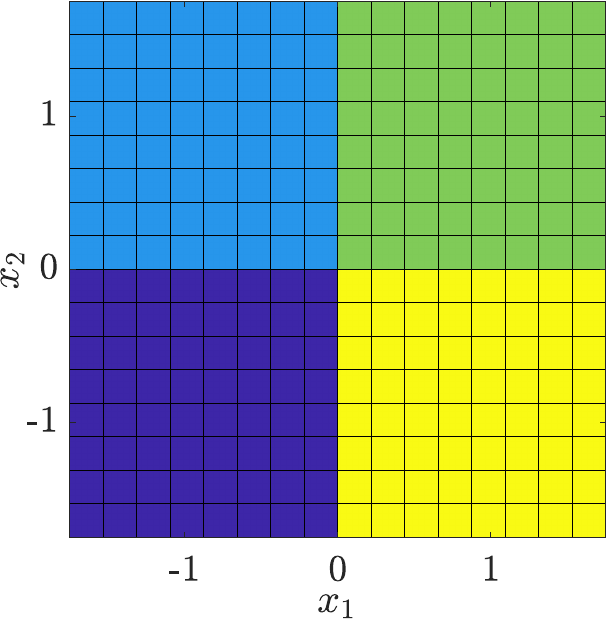}
    \end{minipage}
    \begin{minipage}[t]{0.24\textwidth}
        \centering
        \subfiguretitle{(d)}
        \vspace*{1ex}
        \includegraphics[width=\textwidth]{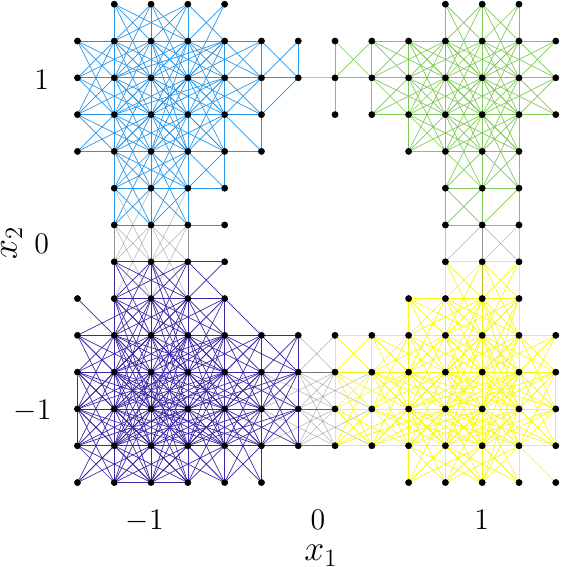}
    \end{minipage} \\[1ex]
    \begin{minipage}[t]{0.24\textwidth}
        \centering
        \subfiguretitle{(e) $ \lambda_1 = 1 $}
        \vspace*{1ex}
        \includegraphics[width=0.95\textwidth]{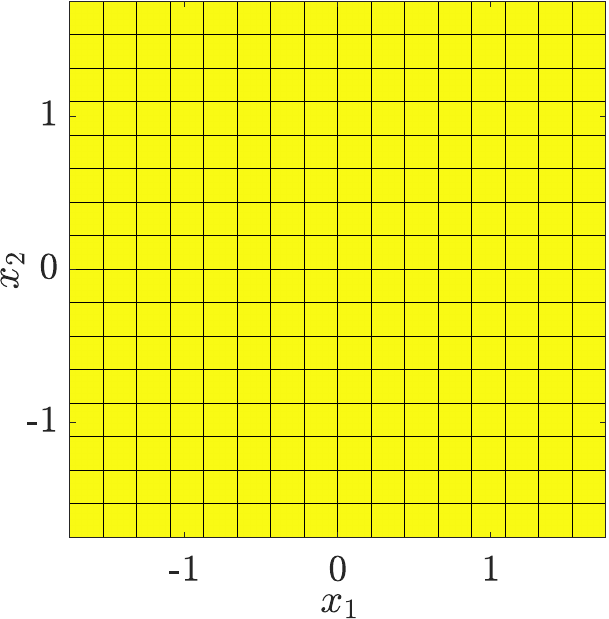}
    \end{minipage}
    \begin{minipage}[t]{0.24\textwidth}
        \centering
        \subfiguretitle{(f) $ \lambda_2 = 0.992 $}
        \vspace*{1ex}
        \includegraphics[width=0.95\textwidth]{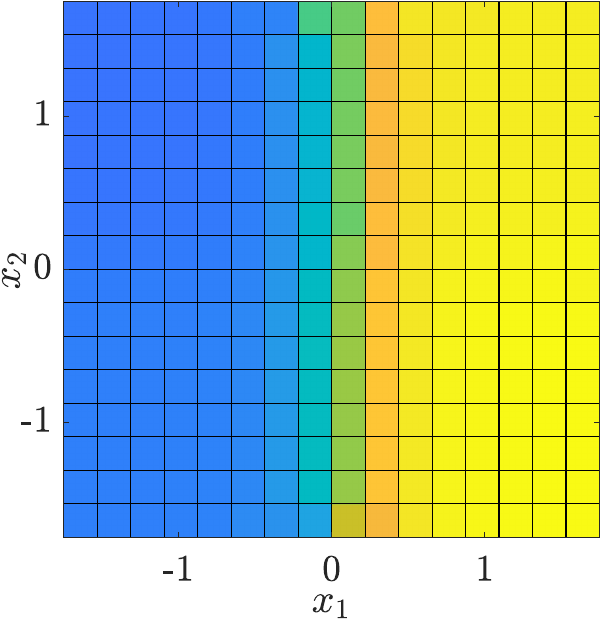}
    \end{minipage}
    \begin{minipage}[t]{0.24\textwidth}
        \centering
        \subfiguretitle{(g) $ \lambda_3 = 0.991 $}
        \vspace*{1ex}
        \includegraphics[width=0.95\textwidth]{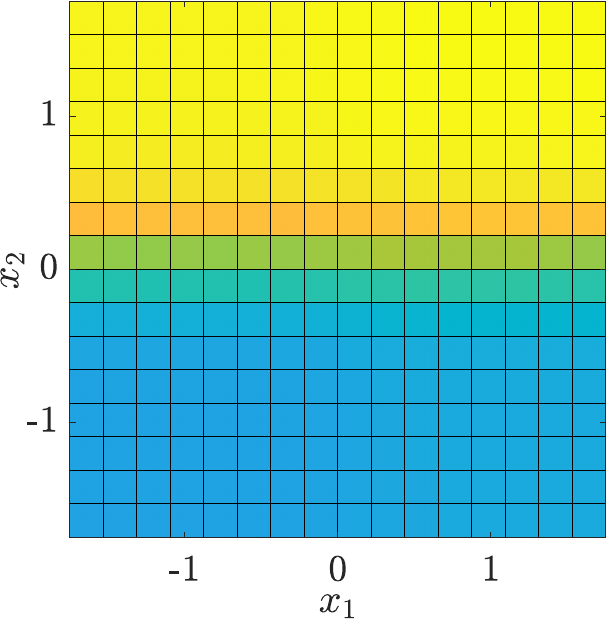}
    \end{minipage}
    \begin{minipage}[t]{0.24\textwidth}
        \centering
        \subfiguretitle{(h) $ \lambda_4 = 0.982 $}
        \vspace*{1ex}
        \includegraphics[width=0.95\textwidth]{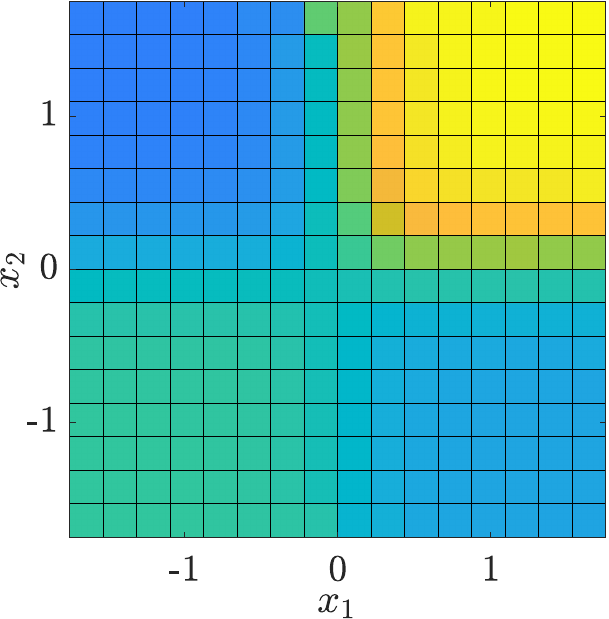}
    \end{minipage}
    \caption{(a)~Quadruple-well potential and the corresponding drift term given by the negative gradient, represented as a quiver plot. (b)~Resulting undirected graph, where we omit edges with a weight smaller than a given threshold and unconnected vertices to highlight the cluster structure. (c)~Decomposition of the domain into four metastable sets. (d)~Clustering of the graph into four weakly coupled clusters. Intra-cluster edges are colored according to the respective cluster and inter-cluster edges are shown in gray. (e)--(h) Dominant eigenfunctions of the Koopman operator associated with the stochastic differential equation.}
    \label{fig:reversible quadruple-well problem}
\end{figure}

Let us again analyze the quadruple-well problem introduced in Example~\ref{ex:quadruple-well system}. We set $ \tau = \frac{1}{10} $, subdivide the domain $ \mathbb{X} = [-1.75, 1.75] \times [-1.75, 1.75] $ into $ 16 \times 16 $ boxes of the same size, and estimate the Koopman operator from one long trajectory containing $100\ts000$ training data points using Ulam's method. A visualization of the drift term, the resulting undirected graph, and the clustering of the domain into metastable sets as well as the clustering of the graph are shown in Figure~\ref{fig:reversible quadruple-well problem}. Each well of the potential results in a cluster of highly connected vertices that is only weakly coupled to the other clusters. This is due to the metastability of the system as transitions between wells are rare events. The deepest well in the lower-left quadrant is, as expected, the most interconnected. The Koopman operator associated with the stochastic differential equation has four dominant eigenvalues, followed by a spectral gap. Applying $ k $-means to the dominant eigenfunctions results in four metastable sets. Equivalently, applying spectral clustering to the graph yields four clusters. \exampleSymbol
\end{example}

Once we have estimated the transition probabilities between the boxes using Ulam's method, we can view the computation of metastable sets as a graph clustering problem. This shows that detecting metastable sets associated with reversible processes is equivalent to finding clusters in undirected graphs.

\subsection{Time-homogeneous non-reversible processes and directed graphs}

The graphs induced by non-reversible processes are in general not undirected since the Koopman operator is not self-adjoint anymore. Let us illustrate the differences between the reversible case and the irreversible case with the aid of a simple example.

\begin{example} \label{ex:nonreversible quadruple-well problem}
Consider the quadruple-well potential introduced in Example~\ref{ex:quadruple-well system}, but now using the non-reversible stochastic differential equation \eqref{eq:non-reversible Langevin} with the antisymmetric matrix
\begin{equation*}
    M =
    \begin{bmatrix}
        0 & c \\
        -c & 0
    \end{bmatrix},
\end{equation*}
where $ c \in \R $ is a parameter. If $ c $ is large, then the additional non-reversible term, which causes curls in the vector field, dominates the dynamics. We use the same box discretization, lag time, and number of test points for estimating the transition probabilities as in the previous example. Since the process is now non-reversible, the eigenvalues of the Koopman operator $ \mathcal{K}^{\ts\tau} $ are complex-valued. Applying $ k $-means to the associated eigenfunctions (using, e.g., only the real parts), does in this case not result in the expected clustering into four metastable sets. Using the eigenvalues and eigenfunctions of the forward--backward operator~$ \mathcal{F} $ instead allows us to detect coherent sets. The numerical results are shown in Figure~\ref{fig:nonreversible quadruple-well problem}. The transition probabilities between clusters are now higher and there are more long-range connections due to the faster dynamics, but the graph still comprises four clusters corresponding to the four wells of the potential. \exampleSymbol

\begin{figure}
    \centering
    \begin{minipage}[t]{0.24\textwidth}
        \centering
        \subfiguretitle{(a)}
        \vspace*{1ex}
        \includegraphics[width=0.95\textwidth]{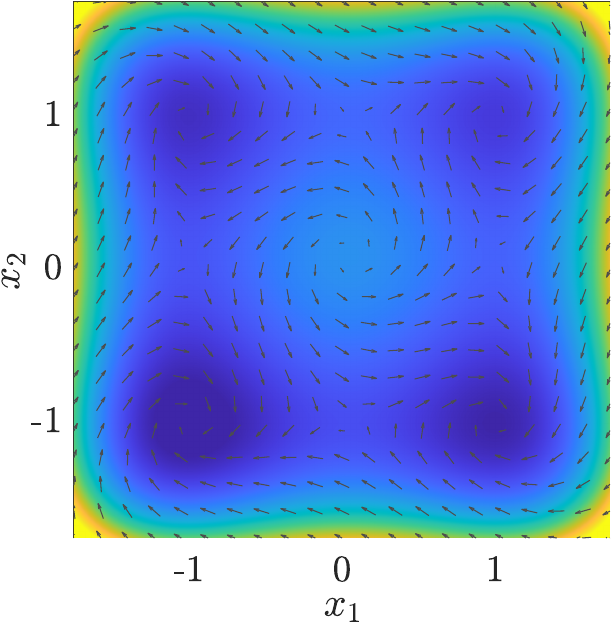}
    \end{minipage}
    \begin{minipage}[t]{0.24\textwidth}
        \centering
        \subfiguretitle{(b)}
        \vspace*{1ex}
        \includegraphics[width=0.95\textwidth]{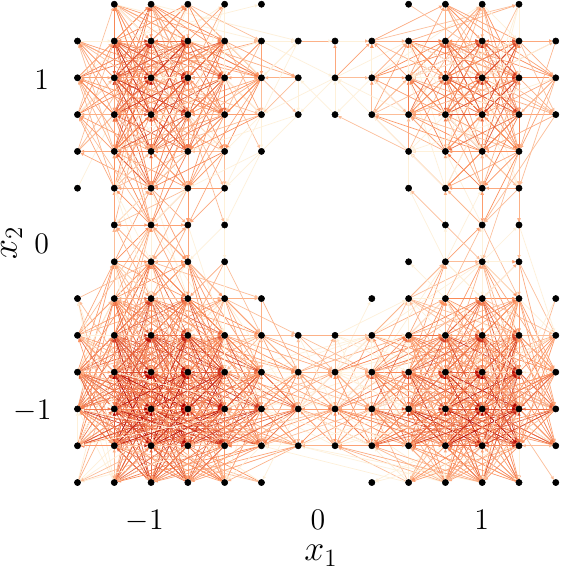}
    \end{minipage}
    \begin{minipage}[t]{0.24\textwidth}
        \centering
        \subfiguretitle{(c)}
        \vspace*{1ex}
        \includegraphics[width=0.95\textwidth]{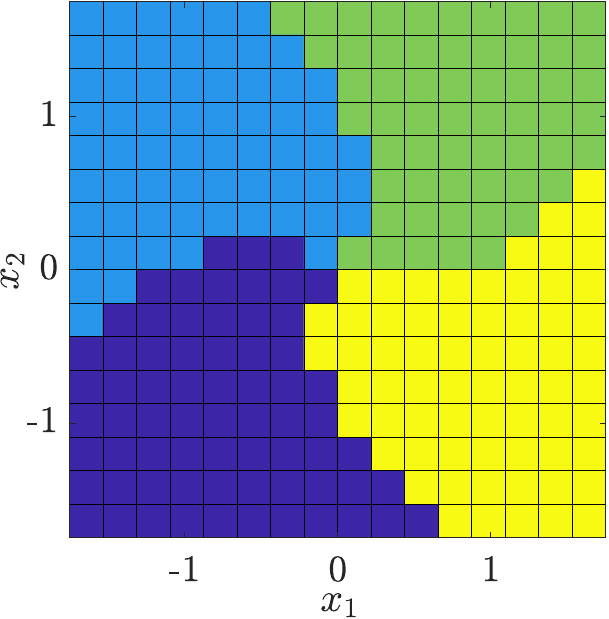}
    \end{minipage}
    \begin{minipage}[t]{0.24\textwidth}
        \centering
        \subfiguretitle{(d)}
        \vspace*{1ex}
        \includegraphics[width=0.95\textwidth]{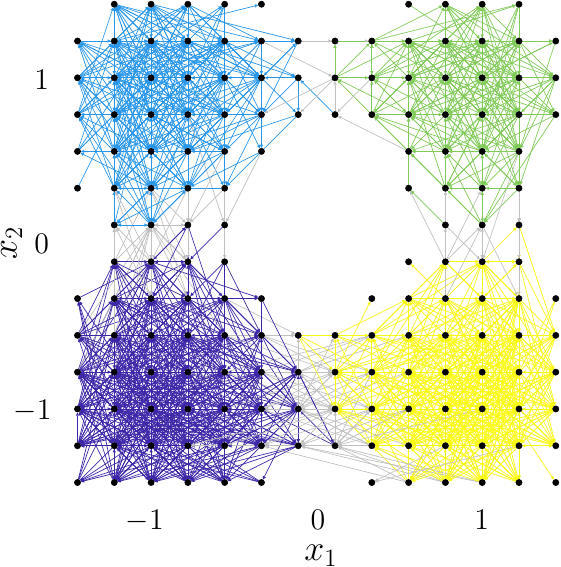}
    \end{minipage} \\[1ex]
    \begin{minipage}[t]{0.24\textwidth}
        \centering
        \subfiguretitle{(e) $ \lambda_1 = 1 $}
        \vspace*{1ex}
        \includegraphics[width=0.95\textwidth]{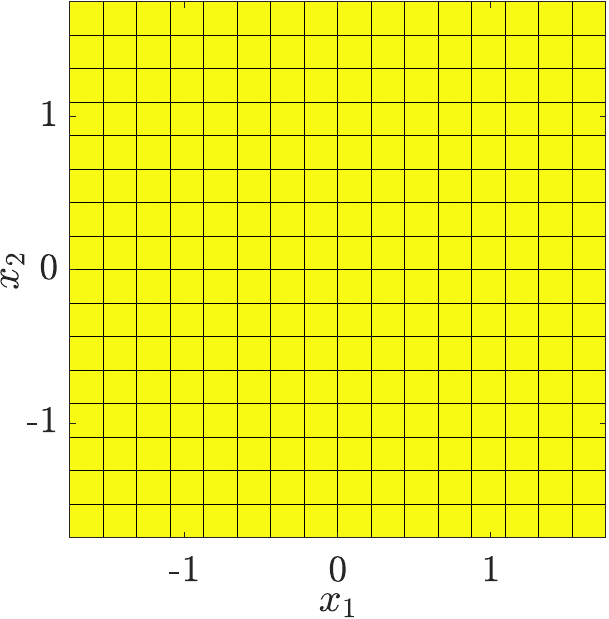}
    \end{minipage}
    \begin{minipage}[t]{0.24\textwidth}
        \centering
        \subfiguretitle{(f) $ \lambda_2 = 0.929 $}
        \vspace*{1ex}
        \includegraphics[width=0.95\textwidth]{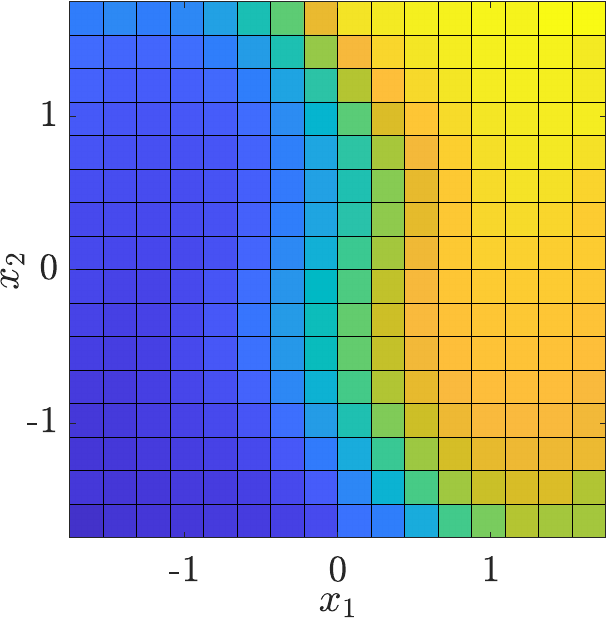}
    \end{minipage}
    \begin{minipage}[t]{0.24\textwidth}
        \centering
        \subfiguretitle{(g) $ \lambda_3 = 0.926 $}
        \vspace*{1ex}
        \includegraphics[width=0.95\textwidth]{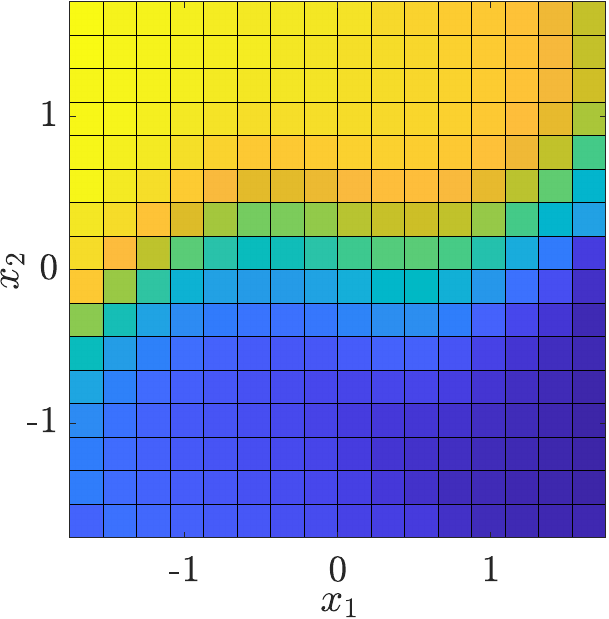}
    \end{minipage}
    \begin{minipage}[t]{0.24\textwidth}
        \centering
        \subfiguretitle{(h) $ \lambda_4 = 0.856 $}
        \vspace*{1ex}
        \includegraphics[width=0.95\textwidth]{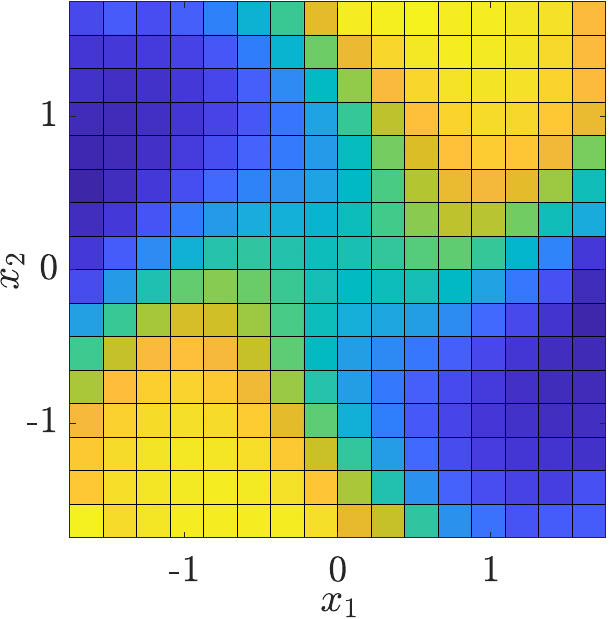}
    \end{minipage}
    \caption{(a) Quadruple-well potential and the corresponding drift term for the non-reversible process with $ c = 2 $. (b)~Resulting directed graph, where we again omit edges with low edge weights and unconnected vertices for visualization purposes. (c)~Decomposition of the domain into four coherent sets. (d)~Clustering of the graph into four weakly coupled clusters. Intra-cluster edges are again colored according to the respective clusters. (e)--(h) Dominant eigenfunctions of the forward--backward operator. Note that the eigenvalues $ \lambda_2, \dots, \lambda_4 $ are smaller than in the reversible case due to the faster dynamics.}
    \label{fig:nonreversible quadruple-well problem}
\end{figure}
\end{example}

The example illustrates that coherent sets can be regarded as a natural generalization of clusters to directed graphs. The only difference here is that we use a different transfer operator for clustering.

\subsection{Time-inhomogeneous processes and time-evolving graphs}

For the time-homogeneous systems considered above, we simply estimated the Koopman operator from one long trajectory so that the training data points are sampled from the invariant density. This is possible since the dynamics do not explicitly depend on the time $ t $. Given a time-inhomogeneous system, on the other hand, the Koopman operator is time-dependent as well.

\begin{example} \label{ex:time-dependent quadruple-well problem}
We modify the potential introduced in Example~\ref{ex:quadruple-well system} in such a way that one of the wells vanishes as shown in Figure~\ref{fig:time-dependent quadruple-well problem}. That is, the resulting stochastic differential equation and the associated transfer operators are now time-dependent. Depending on the lag time, there are either four (if $ \tau $ is small) or only three (if $ \tau $ is large) coherent sets. We choose $ \tau = 3 $, compute the forward--backward operator and its dominant eigenvalues and eigenfunctions, and then use SEBA~\cite{FRS19} to extract coherent sets. Unlike $ k $-means, SEBA computes membership functions and does not assign all boxes to one of the clusters. \exampleSymbol

\begin{figure}
    \centering
    \begin{minipage}[t]{0.24\textwidth}
        \centering
        \subfiguretitle{(a) $ t = 0 $}
        \vspace*{1ex}
        \includegraphics[width=0.95\textwidth]{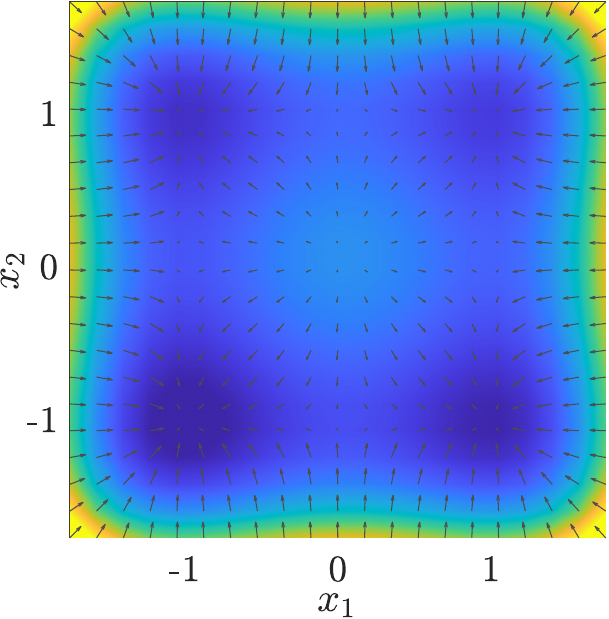} \\[1ex]
    \end{minipage}
    \begin{minipage}[t]{0.24\textwidth}
        \centering
        \subfiguretitle{(b) $ t = 1 $}
        \vspace*{1ex}
        \includegraphics[width=0.95\textwidth]{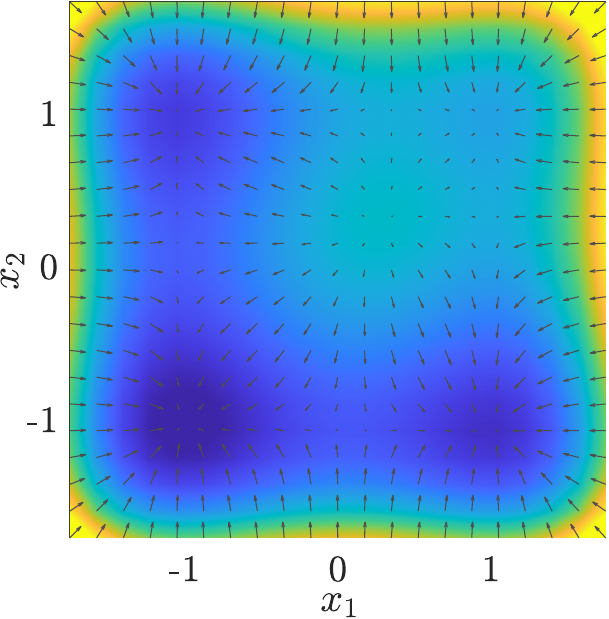} \\[1ex]
    \end{minipage}
    \begin{minipage}[t]{0.24\textwidth}
        \centering
        \subfiguretitle{(c) $ t = 2 $}
        \vspace*{1ex}
        \includegraphics[width=0.95\textwidth]{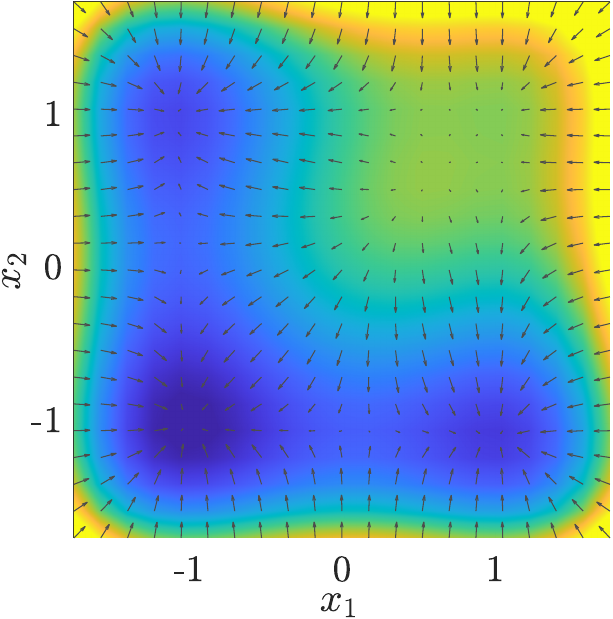} \\[1ex]
    \end{minipage}
    \begin{minipage}[t]{0.24\textwidth}
        \centering
        \subfiguretitle{(d) $ t = 3 $}
        \vspace*{1ex}
        \includegraphics[width=0.95\textwidth]{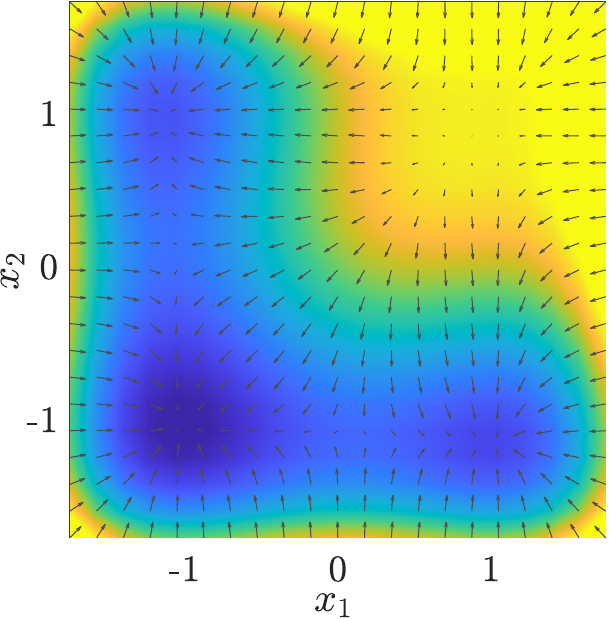} \\[1ex]
    \end{minipage} \\[1ex]
    \begin{minipage}[t]{0.24\textwidth}
        \centering
        \subfiguretitle{(e) $ t = 0 $}
        \vspace*{1ex}
        \includegraphics[width=0.95\textwidth]{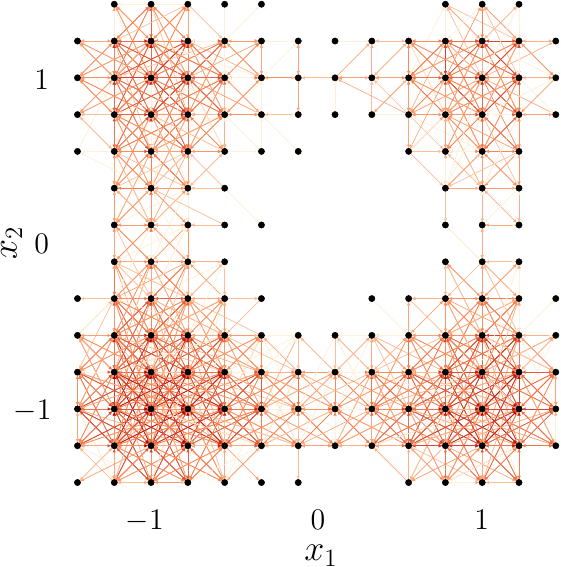}
    \end{minipage}
    \begin{minipage}[t]{0.24\textwidth}
        \centering
        \subfiguretitle{(f) $ t = 1 $}
        \vspace*{1ex}
        \includegraphics[width=0.95\textwidth]{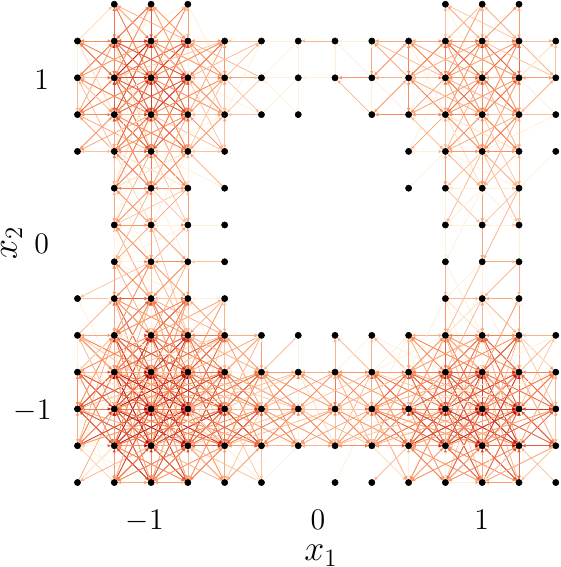}
    \end{minipage}
    \begin{minipage}[t]{0.24\textwidth}
        \centering
        \subfiguretitle{(g) $ t = 2 $}
        \vspace*{1ex}
        \includegraphics[width=0.95\textwidth]{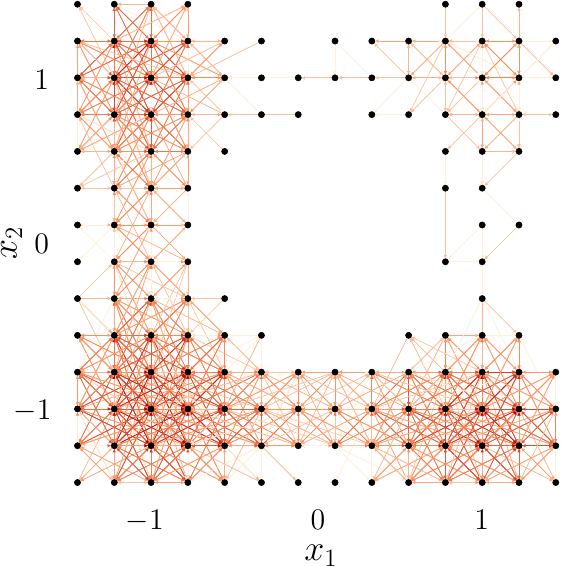}
    \end{minipage}
    \begin{minipage}[t]{0.24\textwidth}
        \centering
        \subfiguretitle{(h) $ t = 3 $}
        \vspace*{1ex}
        \includegraphics[width=0.95\textwidth]{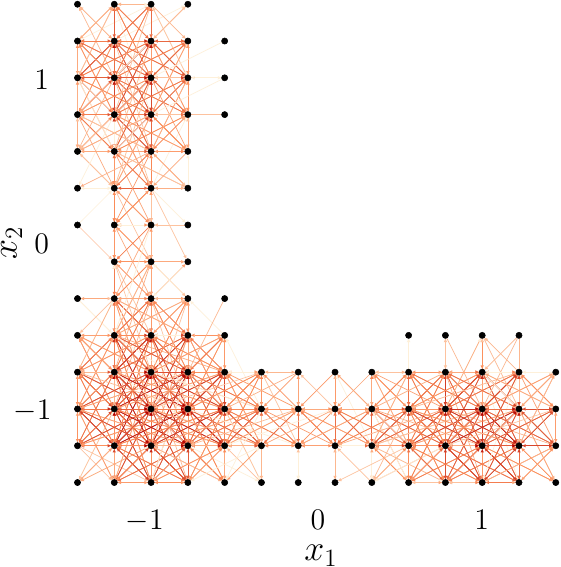}
    \end{minipage} \\[1ex]
    \begin{minipage}[t]{0.24\textwidth}
        \centering
        \subfiguretitle{(i)}
        \vspace*{1ex}
        \includegraphics[width=0.95\textwidth]{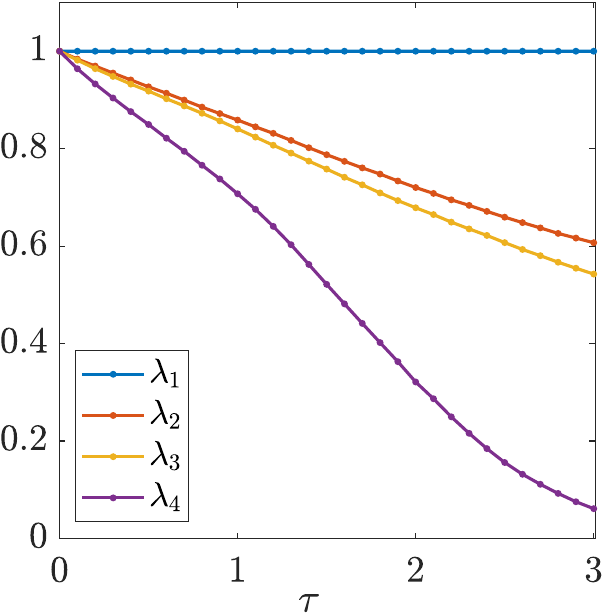}
    \end{minipage}
    \begin{minipage}[t]{0.24\textwidth}
        \centering
        \subfiguretitle{(j)}
        \vspace*{1ex}
        \includegraphics[width=0.95\textwidth]{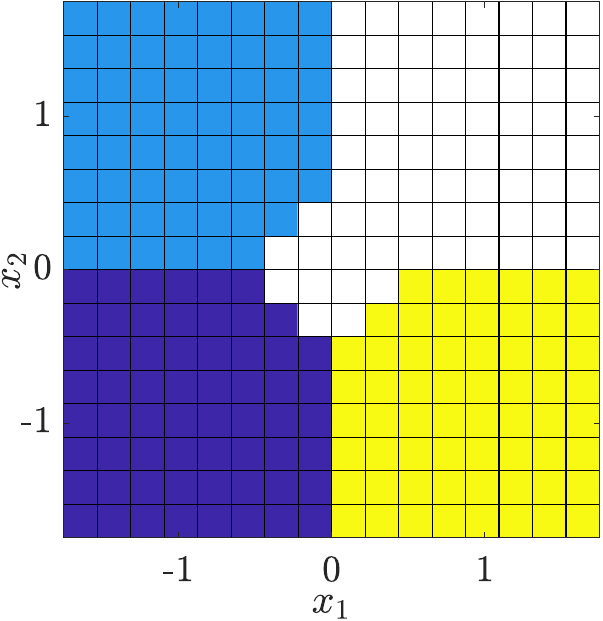}
    \end{minipage}
    \begin{minipage}[t]{0.24\textwidth}
        \centering
        \subfiguretitle{(k)}
        \vspace*{1ex}
        \includegraphics[width=0.95\textwidth]{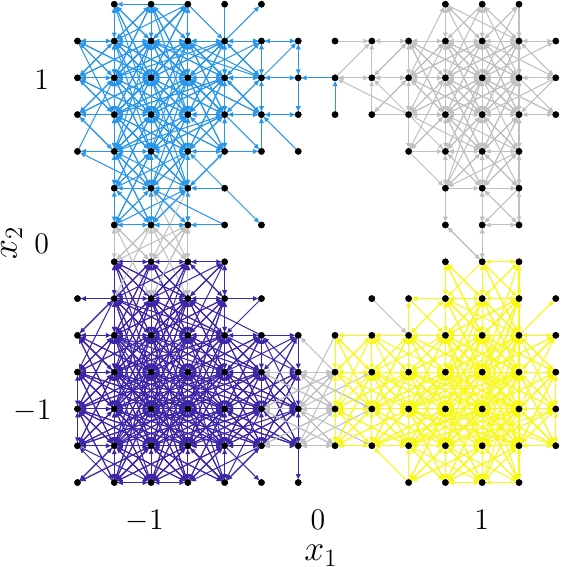}
    \end{minipage}
    \begin{minipage}[t]{0.24\textwidth}
        \centering
        \subfiguretitle{(l)}
        \vspace*{1ex}
        \includegraphics[width=0.95\textwidth]{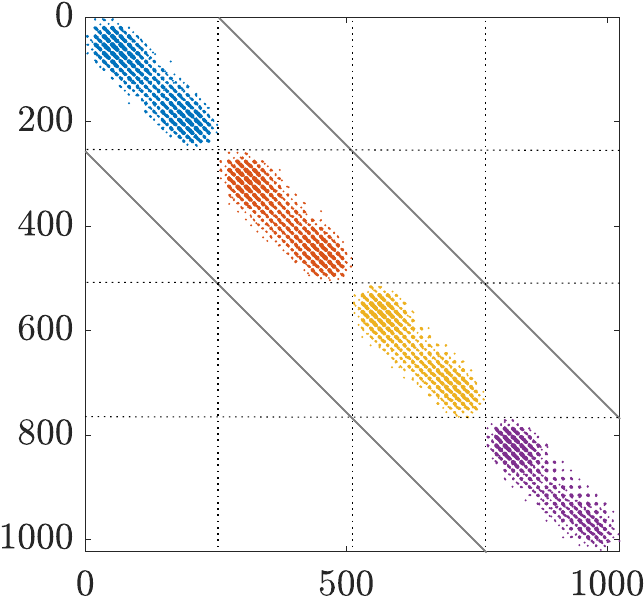}
    \end{minipage}
    \caption{(a)--(d) Time-dependent energy potential and the resulting drift terms at different times~$ t $. (e)--(h)~Corresponding graphs estimated from trajectory data. (i)~Dominant eigenvalues of $ \mathcal{F}^{\ts\tau} $ for different lag times~$ \tau $. All eigenvalues except the trivial eigenvalue $ \lambda = 1 $ decrease over time due to mixing. There are initially four dominant eigenvalues, but the fourth eigenvalue decreases quickly since one of the wells vanishes. That is, one of the coherent sets is dispersed by the dynamics. (j) Soft clustering into three coherent sets, the white boxes are not assigned to any cluster and represent the transition region. (k) Corresponding clustering of the time-evolving graph. (l) Supra-Laplacian associated with the graph. The diagonal blocks represented in different colors are the individual time layers of the graph.}
    \label{fig:time-dependent quadruple-well problem}
\end{figure}

\end{example}

The eigenfunctions of the forward--backward operator associated with a time-dependent stochastic differential equation contain important information about slowly mixing sets. The corresponding graph clustering approach is now based on random walks on a time-evolving graph. In practice, however, we would not generate random walk data to detect clusters, but directly compute the associated transfer operators using the adjacency matrices. There are different ways to take the time-dependency into account: In \cite{KD22}, we proposed computing products of transition probability matrices. This approach detects clusters that are coherent from the initial time to the final time as shown in Example~\ref{ex:time-dependent quadruple-well problem}, but fails to identify clusters that are only coherent for a short time and then vanish. Similarly, new clusters might form within the considered time interval or existing clusters might merge or split. Often several of these events will occur simultaneously so that it is difficult to detect and track such changes. In order to be able to handle these more complicated scenarios, we have to extend existing spectral clustering methods. A popular approach for clustering time-evolving networks is to ``flatten'' the graph, i.e., to create a larger static graph by connecting the different time layers in a certain way. This is illustrated in Figure~\ref{fig:time-dependent quadruple-well problem}\ts(l), where each diagonal block represents connections within a time layer and the off-diagonal blocks connections between different time layers. The eigenvalues and eigenvectors of these so-called supra-Laplacians~\cite{GDGPMA13F} depend strongly on the coupling between time layers. An approach that is more closely related to transfer operators and based on an extension of the forward--backward Laplacian was recently proposed in~\cite{TDK24}. Detecting community structures in time-evolving graphs, however, is a challenging problem and dynamical systems theory may prove beneficial in developing spectral clustering methods and also suitable benchmark problems and metrics to measure the efficacy of different methods.

\section{Conclusion}
\label{sec:Conclusion}

The main goal of this perspective article was to provide an overview of transfer operators as well as their properties and manifold applications. We have shown that it is possible to apply dynamical systems theory to graphs, but also that graph theory can be used to gain insights into characteristic properties of dynamical systems, in the hope that this will inspire further research in this interdisciplinary area. There are many interesting and challenging open problems: Since the behavior of time-evolving graphs is much more complicated---clusters can, for instance, merge and split or disappear and reappear---, a rigorous mathematical definition of clusters or community structures along with efficient and robust clustering algorithms and meaningful metrics for comparing the results are essential. The supra-Laplacian seems to be a promising approach. However, depending on the number of time layers, this can result in extremely large graphs, which may be prohibitively expensive to analyze. Furthermore, we then need to be able to distinguish between different types of clusters since some eigenvectors simply cluster the time layers of the graph into different groups but do not contain any spatial information.

Extending the approaches presented above to adaptive (co-evolving) systems and networks introduces a new set of challenges. These systems are characterized by feedback loops where changes in one component affect others and the overall behavior of the system, leading to nonlinear dynamics. These effects are often seen in real-world applications such as biological and socio-economic systems. Some examples include temporal changes of the energy landscape that are driven by environmental influences or network structure that is evolving according to the status of nodes (sick/healthy). Understanding interactions within feedback loops is essential for modeling emergent phenomena such as cluster appearance. Questions on how clusters evolve in time and if the notion of a cluster needs to be extended (e.g., to structural network clusters and status network clusters), will be a topic of future research. However, this is not easy due to the complexity and dynamic nature of these systems that are often characterized by multiple timescales, leading to complex dynamics.

Another open question is whether it is possible to extend transfer operators and the resulting clustering methods to hypergraphs, simplicial complexes, or graphons. Furthermore, given a networked dynamical system where each vertex represents a deterministic or stochastic process, the underlying graph structure will have an impact on spectral properties of associated transfer operators. Local or global symmetries, for instance, must be reflected in the eigenfunctions. Understanding these relationships might help us develop more efficient and accurate numerical methods for analyzing such interconnected systems. We also typically assume that we can observe the full state of coupled dynamical system, but this might be infeasible in practice. The question then is if we can infer global information about the system or its connectivity structure using only local or partial observations.

\section*{Acknowledgments}

We would like to thank Ginestra Bianconi for the invitation to share our perspective on the interplay between dynamical systems and complex networks.

\bibliographystyle{unsrturl}
\bibliography{DS4CN.bib}

\appendix

\section{Metastability}
\label{app:Metastability}

The term \emph{metastability} has been defined in different ways in various fields. From a mathematical point of view, several methodologies have been developed based on  hitting times \cite{Bovier2, Bovier16}, large deviation theory \cite{WF98}, and spectral methods \cite{HMSCS04}. We define metastability in terms of spectral properties of the transfer operator, as introduced in \cite{Huisinga2006, HuiDiss}. For a review of related definitions, see~\cite{Davies82a, Davies82b, Bovier16, SS13}.

We consider a full partitioning of a $ n $-dimensional state space $ \mathbb{X} $ into $ m $ sets $ \{ \mathbb{B}_1, \dots, \mathbb{B}_m \} $ such that $\bigcup_{i=1}^m \mathbb{B}_i = \mathbb{X}$ and $ \mathbb{B}_i \cap \mathbb{B}_j = \varnothing, \forall i \ne j $. The joint metastability of these sets is
\begin{equation*}
    \mathcal{D}(\mathbb{B}_1, \dots, \mathbb{B}_m) = \sum\limits_{i=1}^m p(\mathbb{B}_i, \mathbb{B}_i),
\end{equation*}
where $ p(\mathbb{B}_i, \mathbb{B}_i) = \mathbb{P}_\mu[X_T \in \mathbb{B}_i \mid X_0 \in \mathbb{B}_i]$ is the residence probability in a set $\mathbb{B}_i$. For a metastable set, the residence probability will be close to $1$ for a large timescale $T$ that is not as large as the transitions times between metastable sets. The next theorem, see also~\cite{HuiDiss, Huisinga2006}, establishes a relation between a given decomposition of a state space into metastable sets and spectral properties of the reweighted Perron--Frobenius operator $\mathcal{T}$, as introduced in Section \ref{subsec:InvariantDistr}.

\begin{theorem}
Let $ \{ \lambda_1,\ldots,\lambda_m \} $ be the $ m $ dominant eigenvalues of $\mathcal{T}$ and $ \{u_1,\ldots,u_m\} $ the corresponding eigenvectors. The joint metastability $ \mathcal{D}(\mathbb{B}_1,\ldots,\mathbb{B}_m) $ of a full partition $ \{ \mathbb{B}_1, \dots, \mathbb{B}_m \} $ of the state space can be bounded from below and above by
\begin{equation*}
    1 + \delta_2\lambda_2 + \dots + \delta_m\lambda_m + c \leq \mathcal{D}(\mathbb{B}_1,\dots,\mathbb{B}_m) \leq 1 +\lambda_2 + \dots + \lambda_m,
\end{equation*}
where $ c = \lambda_{m+1}( 1-\delta_2) \cdots (1-\delta_m) $ and $\delta_j = \|Q u_j\|_{L_\mu^2}^2, j = 2, \dots, m $, is the error of the orthogonal projection $Q$ with respect to the $\mu$-weighted scalar product of the eigenvector $u_j$ onto the space spanned by the characteristic functions $ D = \mspan\{\mathds{1}_{\mathbb{B}_1}, \dots, \mathds{1}_{\mathbb{B}_m}\} $.
\end{theorem}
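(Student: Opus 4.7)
The plan is to express $\mathcal{D}(\mathbb{B}_1,\dots,\mathbb{B}_m)$ as the trace of the reweighted Perron--Frobenius operator $\mathcal{T}$ compressed to the subspace $D$, and then to compare this trace to the dominant spectrum of $\mathcal{T}$. First I would introduce the $\mu$-orthonormal basis $\chi_i := \mathds{1}_{\mathbb{B}_i}/\sqrt{\mu(\mathbb{B}_i)}$ of $D$ and verify the identity $p(\mathbb{B}_i, \mathbb{B}_i) = \langle \chi_i, \mathcal{T}\chi_i\rangle_\mu$, which follows by a direct computation from the definition of $\mathcal{T}$ together with $\mu = \pi$ and the reversibility of the process (so that $\mathcal{T} = \mathcal{K}$). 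Summing over $i$ yields the clean representation
\begin{equation*}
    \mathcal{D}(\mathbb{B}_1, \dots, \mathbb{B}_m) \;=\; \sum_{i=1}^m \langle \chi_i, \mathcal{T}\chi_i\rangle_\mu \;=\; \tr(Q\mathcal{T}Q),
\end{equation*}
where $Q$ denotes the $\mu$-orthogonal projection onto the $m$-dimensional subspace $D$.

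Using the spectral decomposition $\mathcal{T} = \sum_j \lambda_j\, u_j \otimes u_j$ of the compact self-adjoint operator (with $\{u_j\}$ a $\mu$-orthonormal basis), expanding each $\chi_i$ in the basis $\{u_j\}$ and swapping the order of summation gives
\begin{equation*}
    \mathcal{D} \;=\; \sum_{j \ge 1} \lambda_j\, \|Qu_j\|_\mu^2.
\end{equation*}
Since $u_1 \equiv 1 \in D$ with $\lambda_1 = 1$, the leading term equals $1$; for $j = 2,\dots,m$ the coefficients $\|Qu_j\|_\mu^2$ are $\delta_j$ by definition; and the Parseval identity $\sum_j \|Qu_j\|_\mu^2 = \dim D = m$ pins down the total tail mass as $\sum_{j > m} \|Qu_j\|_\mu^2 = \sum_{j=2}^m (1-\delta_j)$. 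The upper bound now follows immediately from the Ky~Fan (Poincaré separation) principle: for any rank-$m$ orthogonal projection $Q$ onto a subspace of $L_\mu^2$, $\tr(Q\mathcal{T}Q)$ is bounded above by the sum of the top $m$ eigenvalues of $\mathcal{T}$, giving $\mathcal{D} \leq 1 + \lambda_2 + \dots + \lambda_m$.

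The lower bound reduces to showing that the tail $\sum_{j > m} \lambda_j \|Qu_j\|_\mu^2$ is at least $c = \lambda_{m+1}(1-\delta_2)\cdots(1-\delta_m)$, and this is where the main difficulty lies: the ordering $\lambda_j \leq \lambda_{m+1}$ for $j > m$ only yields an \emph{upper} bound on the tail, not a lower one, so a more refined geometric argument is required. My plan is to proceed inductively in $j = 2, \dots, m$, at each step peeling off the component of $u_j$ lying outside $D$ and tracking how the residual mass in $D^\perp$ becomes available for the leading sub-dominant eigenvector $u_{m+1}$; each step contributes a factor $(1-\delta_j)$, and after $m-1$ iterations the remaining mass multiplies $\lambda_{m+1}$ to give the claimed product form of $c$. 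Verifying that this iterative orthogonalisation is compatible with the fact that the vectors $\{Qu_j\}$ need not be mutually orthogonal inside $D$, and bookkeeping the residuals so that they combine into a product rather than a sum of $(1-\delta_j)$ factors, is the delicate part of the argument.
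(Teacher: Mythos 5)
The paper itself does not prove this theorem---it is quoted from \cite{HuiDiss, Huisinga2006}---so your proposal can only be judged on its own terms. The first half of your argument is correct and is indeed the standard route: the identity $p(\mathbb{B}_i,\mathbb{B}_i)=\innerprod{\chi_i}{\mathcal{T}\chi_i}_\mu$, the trace representation $\mathcal{D}=\tr(Q\mathcal{T}Q)$, the spectral expansion $\mathcal{D}=\sum_j\lambda_j\norm{Qu_j}_\mu^2$ with leading term $1$ from $u_1=\mathds{1}\in D$, the Parseval bookkeeping $\sum_{j>m}\norm{Qu_j}_\mu^2=\sum_{j=2}^m(1-\delta_j)$, and the Ky Fan upper bound are all sound (modulo stating explicitly that reversibility and compactness give a complete $\mu$-orthonormal eigenbasis).

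The lower bound, however, is where essentially all of the content of the theorem lives, and you have not proved it---you have only announced a plan. You correctly identify the obstruction (for $j>m$ the ordering $\lambda_j\le\lambda_{m+1}$ bounds the tail from \emph{above}), but the proposed ``inductive peeling'' does not resolve it: nothing in the sketch explains (i) why the tail $\sum_{j>m}\lambda_j\norm{Qu_j}_\mu^2$ cannot be driven below $c$ when $\mathcal{T}$ has negative eigenvalues whose eigenvectors overlap substantially with $D$, and (ii) by what mechanism the \emph{product} $\prod_{j=2}^m(1-\delta_j)$ arises, given that Parseval only hands you the \emph{sum} $\sum_{j=2}^m(1-\delta_j)$. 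These are precisely the delicate points; in the proof of \cite{Huisinga2006} they are handled by compressing the tail part $(\mathcal{I}-\Pi)\mathcal{T}(\mathcal{I}-\Pi)$ (with $\Pi$ the projection onto $\mspan\{u_1,\dots,u_m\}$) to $D$ and controlling the eigenvalues of the resulting $m\times m$ self-adjoint operator via a determinant/Gram-matrix estimate, which is where the product form of $c$ comes from. As written, your argument establishes only the upper bound; the lower bound remains a conjecture about how the induction ``should'' work, so the proof is incomplete.
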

This theorem states that the metastability of a decomposition into $m$ sets cannot be larger than the sum of the $ m $ dominant eigenvalues of the transfer operator. The maximal metastability is achieved when $ \delta_j \approx 1 $, for $ j = 2, \dots, m$ , i.e., when the dominant eigenvectors are almost constant on the metastable sets. Thus, by minimizing the projection error $\delta_j$ we can find the optimal  partition of the state space into metastable sets. This is the idea behind many spectral clustering methods.

\section{Properties of transfer operators}
\label{app:Properties of transfer operators}

The following properties can be used to show that the spectra of the forward--backward and backward--forward operators are real-valued.

\begin{lemma} \label{lem:adjointness}
It holds that $ \innerprod{\mathcal{T} u}{f}_\nu = \innerprod{u}{\mathcal{K} f}_\mu $.
\end{lemma}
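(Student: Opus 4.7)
The plan is to compute the $\nu$-weighted inner product on the left-hand side directly from the definitions and transform it into the $\mu$-weighted inner product on the right-hand side by a Fubini exchange of integration order. The key algebraic feature is that the factor $\nu(x)$ introduced by the $\nu$-weighted pairing exactly cancels the prefactor $\frac{1}{\nu(x)}$ appearing in the definition of $\mathcal{T}$, after which the remaining double integral is manifestly symmetric enough to be rewritten as $\innerprod{u}{\mathcal{K} f}_\mu$.

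Concretely, first I would substitute the definition of $\mathcal{T} u(x)$ into $\innerprod{\mathcal{T} u}{f}_\nu = \int_\mathbb{X} \mathcal{T} u(x)\, f(x)\, \nu(x)\, dx$, yielding a double integral
\begin{equation*}
\intop_\mathbb{X} \intop_\mathbb{X} p_\tau(y,x)\, \mu(y)\, u(y)\, f(x)\, dy\, dx,
\end{equation*}
where the strictly positive factor $\nu(x)$ has cancelled. Second, I would invoke Fubini's theorem to interchange the order of integration; this is legitimate under the standing compactness assumptions on the transfer operators stated after the definition, which ensure the required integrability. Third, I would pull the factors $\mu(y)\, u(y)$ outside the inner integral in $x$ and identify $\int_\mathbb{X} p_\tau(y,x)\, f(x)\, dx = \mathcal{K} f(y)$ from the definition of the Koopman operator. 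Fourth, recognising the remaining expression as $\int_\mathbb{X} u(y)\, \mathcal{K} f(y)\, \mu(y)\, dy = \innerprod{u}{\mathcal{K} f}_\mu$ finishes the argument.

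The discrete case on the right-hand side of the tcolorbox follows by the identical manipulation with sums in place of integrals, where Fubini is replaced by Tonelli for nonnegative rearrangement or simply by absolute convergence on the finite vertex set $\mc{X}$, so no separate argument is needed. There is no real obstacle here: the statement is essentially a bookkeeping identity that formalises the duality between $\mathcal{T}$ and $\mathcal{K}$ built into their definitions. The only points requiring a brief mention are the strict positivity of $\nu$ (assumed in the definition of the transfer operators, guaranteeing that the cancellation is well-defined) and the applicability of Fubini (ensured by the compactness and integrability framework already adopted in the paper).
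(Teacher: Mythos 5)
Your proof is correct and follows essentially the same route as the paper: substitute the definition of $\mathcal{T}$, cancel the $\nu(x)$ factor against the weight of the $\nu$-inner product, swap the order of integration, and recognise the inner integral as $\mathcal{K} f(y)$. The paper's proof is exactly this computation (with the discrete case likewise deferred to the analogous manipulation), so there is nothing to add.
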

\begin{proof}
We have
\begin{align*}
    \innerprod{\mathcal{T}^{\ts\tau} u}{f}_\nu
        &= \intop_\mathbb{X} \frac{1}{\nu(x)} \!\intop_\mathbb{X}\! p_\tau(y, x) \ts \mu(y) \ts u(y) \ts \mathrm{d} y \ts f(x) \ts \nu(x) \ts \mathrm{d}x \\
        &= \intop_\mathbb{X} u(y) \!\intop_\mathbb{X}\! p_\tau(y, x) \ts f(x) \ts \mathrm{d}x \ts \mu(y) \ts \mathrm{d} y \\
        &= \innerprod{u}{\mathcal{K}^{\ts\tau} f}_\mu.
\end{align*}
For the graph case, this can be shown in an analogous fashion, see \cite{KT24}.
\end{proof}

\begin{corollary} \label{cor:self-adjoint and positive definite}
It follows that $ \mathcal{F} $ is self-adjoint w.r.t.\ the $ \mu $-weighted inner product and $ \mathcal{B} $ w.r.t.\ the $ \nu $-weighted inner product. Furthermore, $ \mathcal{F} $ and $ \mathcal{B} $ are positive semi-definite.
\end{corollary}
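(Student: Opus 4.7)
The plan is to exploit Lemma~\ref{lem:adjointness} as the single engine of the proof, since it already says that $\mathcal{K}$ plays the role of the adjoint of $\mathcal{T}$ with respect to the weighted duality pairings, mapping $L_\nu^2 \to L_\mu^2$ while $\mathcal{T}$ maps $L_\mu^2 \to L_\nu^2$. The factorizations $\mathcal{F} = \mathcal{K}\mathcal{T}$ and $\mathcal{B} = \mathcal{T}\mathcal{K}$ then mean that $\mathcal{F}$ and $\mathcal{B}$ are built as ``$A^* A$'' and ``$A A^*$'' type operators, a structure that is classically self-adjoint and positive semi-definite. So all I need to do is make this rigorous in the present weighted setting.

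For the first claim, I would take arbitrary $u, v \in L_\mu^2$ and compute
\begin{equation*}
    \innerprod{\mathcal{F} u}{v}_\mu = \innerprod{\mathcal{K}(\mathcal{T} u)}{v}_\mu.
\end{equation*}
Applying Lemma~\ref{lem:adjointness} (read as $\innerprod{\mathcal{K} f}{u}_\mu = \innerprod{f}{\mathcal{T} u}_\nu$, with $f = \mathcal{T} u$ and swapping roles) gives
\begin{equation*}
    \innerprod{\mathcal{K}(\mathcal{T} u)}{v}_\mu = \innerprod{\mathcal{T} u}{\mathcal{T} v}_\nu.
\end{equation*}
The right-hand side is manifestly symmetric under the exchange $u \leftrightarrow v$, which proves self-adjointness of $\mathcal{F}$ with respect to $\innerprod{\cdot}{\cdot}_\mu$. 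Setting $v = u$ moreover yields $\innerprod{\mathcal{F} u}{u}_\mu = \norm{\mathcal{T} u}_{L_\nu^2}^2 \ge 0$, establishing positive semi-definiteness.

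For $\mathcal{B}$ the argument is completely symmetric: for $u, v \in L_\nu^2$, apply Lemma~\ref{lem:adjointness} in its stated form with $u$ replaced by $\mathcal{K} u$ to obtain
\begin{equation*}
    \innerprod{\mathcal{B} u}{v}_\nu = \innerprod{\mathcal{T}(\mathcal{K} u)}{v}_\nu = \innerprod{\mathcal{K} u}{\mathcal{K} v}_\mu,
\end{equation*}
which is again symmetric in $u, v$ and gives $\innerprod{\mathcal{B} u}{u}_\nu = \norm{\mathcal{K} u}_{L_\mu^2}^2 \ge 0$. The graph case is identical after replacing integrals by sums, since Lemma~\ref{lem:adjointness} holds there as well.

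There is no real obstacle here; the proof is two lines once one recognises that Lemma~\ref{lem:adjointness} provides a bona fide adjointness relation between $\mathcal{T}$ and $\mathcal{K}$ across the two weighted spaces. The only point requiring a moment of care is bookkeeping which inner product each expression lives in, so that the application of the lemma is in the right direction; otherwise the content is just the standard fact that $A^* A$ is self-adjoint and positive semi-definite on Hilbert spaces.
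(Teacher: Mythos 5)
Your proposal is correct and follows essentially the same route as the paper: both use Lemma~\ref{lem:adjointness} to rewrite $\innerprod{\mathcal{F}u}{v}_\mu$ as $\innerprod{\mathcal{T}u}{\mathcal{T}v}_\nu$, from which self-adjointness and positive semi-definiteness are immediate, with the $\mathcal{B}$ case handled symmetrically. The only cosmetic difference is that you invoke the symmetry of $\innerprod{\mathcal{T}u}{\mathcal{T}v}_\nu$ directly, whereas the paper applies the lemma a second time to move back to $\innerprod{u}{\mathcal{F}v}_\mu$; the content is identical.
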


\begin{proof}
Using the definition of $ \mathcal{F} $, we obtain
\begin{align*}
    \innerprod{\mathcal{F} u_1}{u_2}_\mu
        &= \innerprod{\mathcal{K} \mathcal{T} u_1}{u_2}_\mu
        = \innerprod{\mathcal{T} u_1}{\mathcal{T} u_2}_\nu
        = \innerprod{u_1}{\mathcal{K} \mathcal{T} u_2}_\mu
        = \innerprod{u_1}{\mathcal{F} u_2}_\mu.
\end{align*}
In particular, $ \innerprod{\mathcal{F} u}{u}_\mu = \norm{\mathcal{T} u}_\nu^2 \ge 0 $. The results for the backward--forward operator follow in the same way.
\end{proof}

\begin{lemma} \label{lem:spectral radius}
The spectral radii of $ \mathcal{F} $ and $ \mathcal{B} $ are $ 1 $.
\end{lemma}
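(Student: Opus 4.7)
The plan is to bracket the spectral radius from both sides: exhibit the eigenvalue $1$ to get a lower bound, and show $\|\mathcal{F}\| \le 1$ and $\|\mathcal{B}\| \le 1$ (which, by self-adjointness from Corollary~\ref{cor:self-adjoint and positive definite}, equals the spectral radius) for the upper bound.

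For the lower bound, I would observe that the constant function $\mathds{1}$ is an eigenfunction of both operators with eigenvalue $1$. Indeed, $\mathcal{K}\mathds{1} = \mathds{1}$ because $\int_\mathbb{X} p_\tau(x, y)\,\mathrm{d}y = 1$ for every $x$ (and analogously $\sum_{y} p(x,y) = 1$ in the graph case), and $\mathcal{T}\mathds{1} = \mathds{1}$ because
\begin{equation*}
    \mathcal{T}\mathds{1}(x) = \frac{1}{\nu(x)}\intop_\mathbb{X} p_\tau(y,x)\,\mu(y)\,\mathrm{d}y = \frac{\nu(x)}{\nu(x)} = 1,
\end{equation*}
by the very definition of $\nu$. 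Hence $\mathcal{F}\mathds{1} = \mathcal{K}\mathcal{T}\mathds{1} = \mathds{1}$ and $\mathcal{B}\mathds{1} = \mathds{1}$, so $1$ lies in the spectrum of each operator and the spectral radii are at least~$1$.

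For the upper bound, I would show that $\mathcal{K}\colon L_\nu^2 \to L_\mu^2$ is a contraction using Jensen's inequality applied to the probability density $p_\tau(x,\cdot)$ (and the conditional probabilities $p(x,\cdot)$ in the graph case):
\begin{equation*}
    \norm{\mathcal{K} f}_\mu^2 = \intop_\mathbb{X} \abs{\intop_\mathbb{X} p_\tau(x,y) f(y)\,\mathrm{d}y}^2 \mu(x)\,\mathrm{d}x \le \intop_\mathbb{X}\!\intop_\mathbb{X} p_\tau(x,y)\abs{f(y)}^2\,\mathrm{d}y\,\mu(x)\,\mathrm{d}x = \norm{f}_\nu^2,
\end{equation*}
where in the last step I swap the order of integration and use the definition of $\nu$. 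By Lemma~\ref{lem:adjointness}, $\mathcal{T}\colon L_\mu^2 \to L_\nu^2$ is the adjoint of $\mathcal{K}$, so $\norm{\mathcal{T}} = \norm{\mathcal{K}} \le 1$. Consequently $\norm{\mathcal{F}} = \norm{\mathcal{K}\mathcal{T}} \le \norm{\mathcal{K}}\norm{\mathcal{T}} \le 1$, and likewise $\norm{\mathcal{B}} \le 1$. Since $\mathcal{F}$ and $\mathcal{B}$ are self-adjoint on their respective weighted Hilbert spaces, the spectral radius coincides with the operator norm, so both spectral radii are at most~$1$. Combining the two bounds yields the claim.

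The proof is essentially routine once the right setup is in place; the only mildly delicate point is keeping track of which inner product each operator is self-adjoint or contractive with respect to, and matching domain and codomain correctly when composing $\mathcal{K}$ and $\mathcal{T}$. In particular, one should be careful that the Jensen step for $\mathcal{K}$ uses $p_\tau(x,\cdot)$ as a probability measure in $y$, whereas the change-of-order step produces the weight $\nu$, which is precisely why the correct target norm is $\norm{\cdot}_\nu$ and the resulting contraction goes from $L_\nu^2$ to $L_\mu^2$.
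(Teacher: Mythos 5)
Your proof is correct and follows essentially the same route as the paper: exhibit $\mathds{1}$ as a fixed point of $\mathcal{K}$ and $\mathcal{T}$ (hence of $\mathcal{F}$ and $\mathcal{B}$) for the lower bound, and bound $\norm{\mathcal{F}} \le \norm{\mathcal{K}}\ts\norm{\mathcal{T}} \le 1$ for the upper bound. The only difference is that you prove the contraction property of $\mathcal{K}\colon L_\nu^2 \to L_\mu^2$ directly via Jensen's inequality, whereas the paper outsources that step to a citation.
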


\begin{proof}
We show only the proof for $ \mathcal{F}^{\ts\tau} $. Let $ \mathds{1} $ denote the function that is one everywhere, then $ \mathcal{K}^{\ts\tau} \mathds{1} = \mathds{1} $ since $ p_\tau(x, \cdot) $ is a probability density. Similarly, $ \mathcal{T}^{\ts\tau} \mathds{1} = \frac{1}{\nu} \mathcal{P}^{\ts\tau} \mu = \mathds{1} $ by construction. It was shown in \cite{Froyland13} that then $ \norm{\mathcal{T}^{\ts\tau}} = \norm{\mathcal{K}^{\ts\tau}} \le 1 $ and
\begin{equation*}
    \norm{\mathcal{F}^{\ts\tau}} = \norm{\mathcal{K}^{\ts\tau} \mathcal{T}^{\ts\tau}} \le \norm{\mathcal{K}^{\ts\tau}} \norm{\mathcal{T}^{\ts\tau}} \le 1,
\end{equation*}
but $ \mathcal{F}^{\ts\tau} \mathds{1} = \mathcal{K}^{\ts\tau} \mathcal{T}^{\ts\tau} \mathds{1} = \mathds{1} $ so that $ \norm{\mathcal{F}^{\ts\tau}} = 1 $. Thus, also the spectral radius is $ 1 $. The proof for the graph case can be found in~\cite{KT24}.
\end{proof}

\end{document}